\pgfplotsset{compat=1.13}
\newcommand{\Real}{\ensuremath{\mathbb{R}}}
\newcommand{\RealExt}{\ensuremath{\overline{\Real}}}
\newcommand{\RecSpace}{\ensuremath{\mathcal{X}}}
\newcommand{\DataSpace}{\ensuremath{\mathcal{Y}}}
\newcommand{\PrimS}{\ensuremath{\mathcal{X}}}
\newcommand{\DualS}{\ensuremath{\mathcal{Y}}}
\newcommand{\GenericSpace}{\ensuremath{\mathcal{H}}}
\newcommand{\norel}{\ensuremath{\mathrel{\phantom{=}}}}
\newcommand{\weakto}{\ensuremath{\rightharpoonup}}
\DeclareMathOperator{\ForwardOp}{\ensuremath{T}}
\DeclareMathOperator{\LogLikelihood}{\ensuremath{\ell}}
\newcommand{\DataDiscrepancy}{D}
\newcommand{\Regularizer}{R}
\DeclareMathOperator{\Prox}{Prox}
\newcommand{\Id}{{\rm Id}}
\newcommand{\ee}{\ensuremath{\mathrm{e}}}
\newcommand{\dd}{\ensuremath{\mathrm{d}}}
\newcommand{\resolvent}[2]{\ensuremath{J^{#1}_{#2}}}
\newcommand{\MonotoneOpA}{\ensuremath{A}}
\newcommand{\MonotoneOpB}{\ensuremath{B}}
\newcommand{\GenericOp}{\ensuremath{A}}
\newcommand{\ExMatrix}{\ensuremath{A}} 
\newcommand{\SchemeMatrixA}{\ensuremath{\bm C}}
\newcommand{\SchemeMatrixB}{\ensuremath{\bm B}}
\newcommand{\SchemeMatrixC}{\ensuremath{\bm A}}
\newcommand{\SchemeMatrixD}{\ensuremath{\bm D}} 
\newcommand{\Lagr}[2]{\mathcal{L}\p{#1; #2}}
\newcommand{\signal}{\ensuremath{x}}
\newcommand{\truesignal}{\signal_{\text{true}}}
\newcommand{\data}{\ensuremath{b}}
\newcommand{\noise}{\delta\data}
\newcommand{\fstparam}{\ensuremath{c_{21}}}
\newcommand{\sndparam}{\ensuremath{a_{21}}}
\newcommand{\aoneone}{\ensuremath{c_{11}}}
\newcommand{\aonetwo}{\ensuremath{c_{12}}}
\newcommand{\atwotwo}{\ensuremath{c_{22}}}
\newcommand{\coneone}{\ensuremath{a_{11}}}
\newcommand{\conetwo}{\ensuremath{a_{12}}}
\newcommand{\ctwotwo}{\ensuremath{a_{22}}}
\newcommand{\fstparamP}{\ensuremath{c_{21}}}
\newcommand{\sndparamP}{\ensuremath{a_{21}}}
\newcommand{\Probdist}{\mathcal{P}}
\DeclareMathOperator{\Expect}{\mathbb{E}}
\DeclareMathOperator*{\argmin}{arg \, min}
\DeclareMathOperator{\diag}{{\rm diag}}
\newcommand{\Ordo}{\mathcal{O}}
\newcommand{\INNER}[2]{\ensuremath{\mathord{\left\langle #1, #2 \right\rangle}}}
\newcommand{\norm}[1]{\ensuremath{\mathord{\left\lVert #1 \right\rVert}}}
\newcommand{\StepLength}{\ensuremath{\sigma}}
\newcommand{\p}[1]{\ensuremath{\mathord{\left( #1 \right)}}}
\newcommand{\set}[1]{\ensuremath{\mathord{\left\lbrace #1 \right \rbrace}}}
\newcommand{\setcond}[2]{\ensuremath{\mathord{\left\lbrace #1 \; \middle\vert \;
      #2 \right \rbrace}}}
\newcommand{\inpr}{\INNER}
\newcommand{\defeq}{\coloneqq} 
\DeclareMathOperator{\graph}{graph}
\newcommand{\stochastic}[1]{\mathsf{#1}}
\newcommand{\stb}{\stochastic{b}}
\newtheorem{theorem}{Theorem}[section]
\newtheorem{lemma}[theorem]{Lemma}
\newtheorem{corollary}[theorem]{Corollary}
\newtheorem{proposition}[theorem]{Proposition}
\theoremstyle{remark}
\newtheorem{example}[theorem]{Example}
\newtheorem{remark}[theorem]{Remark}
\theoremstyle{definition}
\newtheorem{algorithm}[theorem]{Algorithm}
\numberwithin{equation}{section}
\title{Data-driven nonsmooth optimization}
\author[1]{Sebastian Banert\thanks{equal contribution}}
\author[1]{Axel Ringh$^*$}
\author[1,2]{Jonas Adler}
\author[1]{\\Johan Karlsson}
\author[1]{Ozan \"{O}ktem}
\affil[1]{KTH Royal Institute of Technology, 100 44 Stockholm, Sweden.} \affil[2]{Elekta, Box 7593, 103 93 Stockholm, Sweden.
\newline
  Email: \{\texttt{banert}, \texttt{aringh}, \texttt{jonasadl}, \texttt{ozan}\}%
  \texttt{@kth.se}, \texttt{johan.karlsson@math.kth.se}
}
\begin{document}
\maketitle

\begin{abstract}
In this work, we consider methods for solving large-scale optimization problems
with a possibly nonsmooth objective function. 
The key idea is to first specify a class of optimization algorithms using a generic iterative scheme involving only linear operations and
applications of proximal operators. 
This scheme contains many modern primal-dual first-order solvers like the Douglas--Rachford and hybrid gradient methods as special cases.
Moreover, we show convergence to an optimal point for a new method which also belongs
to this class.
Next, we interpret the generic scheme as a neural network and use
unsupervised training to learn the best set of parameters for a specific
class of objective functions while imposing a fixed number of iterations.
In contrast to other approaches of \enquote{learning to optimize}, we present an approach which
learns parameters only in the set of convergent schemes.
As use cases, we consider optimization problems arising in tomographic
reconstruction and image deconvolution,
and in particular a family of
total variation regularization problems.
\end{abstract}

\section{Introduction}
Many problems in science and engineering can be formulated as convex
optimization problems which then need to be solved accurately and efficiently.
In this paper we focus on methods for solving such problems, namely of the form
\begin{equation}\label{eq:general_form}
 \min_{x \in \PrimS} \Bigl[ F\p{x} + \sum_{i = 1}^m G_i\p{L_i x} \Bigr].
\end{equation}
Here, $L_i \colon \PrimS \to \DualS_i$, $i = 1, \ldots, m$, are linear operators, where $\PrimS, \DualS_1, \ldots, \DualS_m$ are Hilbert spaces, and $F\colon
\PrimS \to \RealExt$ and $G_i\colon \DualS_i \to \RealExt$, $i = 1, \ldots, m$,
are proper, convex and lower semicontinuous functions.
This class of optimization problems appears for example in variational
regularization of inverse problems in imaging, such as X-ray computed
tomography (CT) \cite{natterer2001mathematical, natterer2001themathematics},
magnetic resonance imaging (MRI) \cite{brown2014magnetic}, and electron
tomography \cite{oktem2015mathematics}.

A key challenge is to handle the computational burden. 
In imaging, and especially so for three-dimensional imaging, the resulting
optimization problem is very high-dimensional even after clever digitization and
might involve more than one billion variables.
Moreover, many regularizers that are popular in imaging (see Section \ref{sec:simulation}), like those associated
with sparsity, result in a nonsmooth objective function.
These issues prevent usage of variational methods in time-critical applications, such as medical imaging in a clinical setting.
Modern methods which aim at overcoming these obstacles are typically based on the
proximal point algorithm \cite{rockafellar1976monotone} and operator splitting
techniques, see e.g., \cite{eckstein1992douglas, beck2009afast,
  chambolle2011first, boyd2011distributed, combettes2011proximal,
  combettes2012primal, he2012convergence, bot2013douglas, bot2015convergence,
  ko2017class, latafat2017asymmetric, bauschke2017convex} and references
therein.

The main objective of the paper is to offer a computationally tractable approach
for minimizing large-scale nondifferentiable, convex functions.
The key idea is to \enquote{learn} how to optimize from training data, resulting in an
iterative scheme that is optimal given a fixed number of steps, while its
convergence properties can be analyzed. We will make this precise in Section
\ref{sec:learning_opt_solver}.

Similar ideas have been proposed previously in \cite{gregor2010learning, li2016learning, andrychowicz2016learning}, but these approaches are either limited to specific classes of iterative schemes, like gradient-descent-like schemes \cite{li2016learning, andrychowicz2016learning} that are not applicable for nonsmooth optimization, or specialized to a specific class of regularizers as in \cite{gregor2010learning}, which limits the possible choices of regularizers and forward operators.
The approach taken here leverages upon these ideas and yields a general
framework for learning optimization algorithms that are applicable to solving
optimization problems of the type \eqref{eq:general_form}, inspired by the
proximal-type methods mentioned above.

A key feature is to present a general formulation that includes several existing
algorithms, among them the primal-dual hybrid gradient (PDHG) algorithm (also
called the Chambolle--Pock algorithm) \cite{chambolle2011first} and the
primal-dual Douglas--Rachford algorithm \cite{bot2013douglas} as a special case.
This means that the learning can be done in a space of schemes that includes these solvers as special cases.
Moreover, from the proposed parametrization we also derive a new optimization algorithm.
We demonstrate the performance of a solver based on this general formulation by
training in an unsupervised manner for two inverse problems: image
reconstruction in CT and deconvolution, both through
TV regularization.
In particular, we present a method to learn the parameters of a convergent
solver and demonstrate the improvement to the ad-hoc parameter choice.
Moreover, empirical results indicate that by using additional
parameters we can achieve improved performance.

The paper is organized as follows: In Section \ref{sec:background} we recall
elements of monotone operator theory and convex optimization, while setting up
the notation. In Section \ref{sec:new_solver}, we present and analyze a new
solver for monotone inclusions, which in Section \ref{subsec:convex_opt_solver}
is specialized to convex optimization problems of the form
\eqref{eq:general_form}. Section \ref{sec:learning_opt_solver} deals with the
notion of \enquote{learning} an optimization solver, and in Section
\ref{sec:simulation} we present numerical experiments for variational
regularization of inverse problems in imaging.

\section{Background}\label{sec:background}
Solving optimization problems of the type in \eqref{eq:general_form} are often
addressed using \emph{variable splitting} techniques, which work well if the
different terms are \enquote{simple} \cite{bauschke2017convex, combettes2011proximal,
  eckstein1989splitting}. To keep the discussion as general as possible and
since it does not add complexity to the proof of convergence, we will carry it
out for monotone inclusions instead of convex optimization problems.
The following subsections present necessary background material on monotone operators, convex optimization, and variable splitting.

\subsection{Fundamental notions}
Let $\GenericSpace$ be a real Hilbert space with the inner product $\inpr{\cdot}{\cdot}$. We
denote convergence in norm (or strong convergence) and weak convergence by $\to$
and $\weakto$, respectively.
A set-valued operator $S\colon \GenericSpace \rightrightarrows \GenericSpace$ is \emph{monotone} if
\[
  \inpr{z - z'}{w - w'} \geq 0 \quad \text{for all $z, z' \in \GenericSpace$, $w \in S\p{z}$, and $w' \in S\p{z'}$.}
\]
A monotone operator $S$ is called \emph{maximally monotone} if, in addition, the graph of $S$,
defined by $\graph(S) \defeq \setcond{\p{z, w} \in \GenericSpace \times \GenericSpace}{w \in S\p{z}}$,
is not properly contained in the graph of any other monotone operator, i.e.,
\[
  \p{z, w} \in \graph(S)
  \iff \text{$\inpr{z - z'}{w - w'} \geq 0$ for all $\p{z', w'} \in \graph(S)$.}
\]
A monotone operator is called \emph{strongly monotone} if there exists a $\mu >
0$ such that
\[
  \inpr{z - z'}{w - w'} \geq \mu \norm{z - z'}^2 \quad \text{for all $z, z' \in \GenericSpace$, $w \in S\p{z}$, and $w' \in S\p{z'}$.}
\]
Next, for any scalar $\sigma > 0$, the operator  $\resolvent{\sigma}{S} = \p{\Id + \sigma S}^{-1}$ is called the
\emph{resolvent operator} or \emph{proximal mapping}
\cite{rockafellar1976monotone}. It can be shown that $\resolvent{\sigma}{S}$ is a
single-valued operator $\GenericSpace \to \GenericSpace$ \cite[Proposition 23.8]{bauschke2017convex}.
Note that an efficient routine to evaluate $\resolvent{\sigma}{S}$ for all
$\sigma > 0$ also enables to evaluate the resolvent operator of $S^{-1}$ via
\begin{equation}\label{eq:Moreau_decomposition}
  \resolvent{\sigma}{S^{-1}}\p{z} = z - \sigma \resolvent{1/\sigma}{S}\p{z/\sigma}
\end{equation}
for $z \in \GenericSpace$ (see \cite[Proposition 23.20]{bauschke2017convex}).

A \emph{maximally monotone inclusion problem} is defined as the problem of finding a
point $z \in \GenericSpace$ such that $0 \in S\p{z}$, which we henceforth denote $z \in
\text{zer}(S)$.
In fact, it is easily seen that $z \in \text{zer}(S)$ is equivalent with $z$
being a fixed-point for the resolvent operator, i.e.,  $z = \resolvent{\sigma}{S}\p{z}$.

One reason for the interest in maximally monotone inclusion problems is that the
\emph{subdifferential} $\partial F$ of a proper, convex and lower
semicontinuous function $F \colon \GenericSpace \to \RealExt$ is a maximally monotone operator
\cite{moreau1965proximite}. Here, $\partial F: \GenericSpace \rightrightarrows \GenericSpace$ is defined to be
\[
  \partial F\p{x} \defeq \setcond{y \in \GenericSpace}{\forall \tilde x \in \GenericSpace\colon F\p{\tilde x}
  \geq F\p{x} + \inpr{y}{\tilde x - x}}
\]
if $F\p{x} \in \Real$ and $\partial F\p{x} = \emptyset$ if $F\p{x} \in \set{\pm
  \infty}$.
Moreover, the subdifferential at any minimizer of such a function
contains zero, so $F$ can be minimized by solving a
maximally monotone inclusion problem \cite[Theorem 16.3]{bauschke2017convex}. 
Note that we do not distinguish between local and global
minimizers, since any local minimizer of a convex function is global
\cite[Proposition 11.4]{bauschke2017convex}.

\begin{remark}
  A continuous linear operator $\GenericOp \colon \GenericSpace \to \GenericSpace$ of a Hilbert space $\GenericSpace$ into
  itself is maximally monotone if and only if it is accretive, i.e., if
  $\inpr{x}{\GenericOp x} \geq 0$ for all $x\in \GenericSpace$ \cite[Corollary
  20.28, see also Definition 2.23]{bauschke2017convex}, and it is the
  subdifferential $\partial f$ of a function $f\colon \GenericSpace \to \RealExt$ if and
  only if it is additionally symmetric \cite[Proposition
  2.51]{barbu2012convexity}. In particular the Volterra integral operator
  \cite[Example 4.4]{bauschke2010examples}
  \[
    \p{\GenericOp f}\p{t} = \int_0^t f\p{s} \,\dd s
  \]
  and its inverse are maximally
  monotone, but not the subdifferential of a proper, convex and lower
  semicontinuous function.
\end{remark}

For $F\colon \GenericSpace \to \RealExt$, the Fenchel dual (convex conjugate) function
$F^*\colon \GenericSpace \to \RealExt$ is defined by
\cite[Chapter~13]{bauschke2017convex}
\[
  F^*\p{y} \defeq \sup_{x\in \GenericSpace} \Bigl[ \inpr{x}{y} - F\p{x} \Bigr] \quad\text{for $y \in \GenericSpace$.}
\]
If $F$ is proper, convex and lower semicontinuous, then $\partial F^* =
\p{\partial F}^{-1}$ \cite[Corollary 16.30]{bauschke2017convex}.

The \emph{proximal point algorithm} is a fixed-point iterative scheme for solving the maximally monotone inclusion problem.
It is given by repeatedly applying the resolvent operator:
\[
  z^{k+1} = \resolvent{\sigma}{S} \p{z^k}.
\]
It can now be shown that if $\text{zer}(S) \neq \emptyset$ then $z^k$
converges weakly to a point $z^\infty \in \text{zer}(S)$
\cite{rockafellar1976monotone} for all starting points $z^0 \in \GenericSpace$.
The special case when $S \defeq \partial F$, i.e., the case of the resolvent of a
subdifferential of $F$, is called the \emph{proximal operator}. One can express the proximal as \cite{moreau1965proximite}
\begin{equation}\label{eq:proximal}
  \resolvent{\sigma}{\partial F} \p{x} = \Prox^{\sigma}_{F} \p{x} =
  \argmin_{x' \in \GenericSpace} \set{F\p{x'} + \frac{1}{2\sigma} \norm{x' - x }^2}.
\end{equation}
To see this, we simply note that if $x'$ is a minimizing argument then
\[
  0 \in \partial F\p{x'} + \frac{1}{\sigma} \p{x' - x} \quad \iff
  \quad x' = \resolvent{\sigma}{\partial F} \p{x}.
\]
It is thus interesting to note that the fixed-point iteration
\[
x^{k+1} = \Prox^{\sigma}_{F} \p{x^k} = \argmin_{x' \in \GenericSpace} \set{F\p{x'} +
  \frac{1}{2\sigma} \norm{x' - x^{k}}^2}
\]
generates a sequence $\p{x^k}$ that converges weakly to a minimizer of $F$.
In this setting, the parameter $\sigma$ can be interpreted as a step length.
This can give rise to methods for solving the optimization problems if
the proximal operator can be efficiently computed, e.g., through a closed-form
expression.
Note that \eqref{eq:Moreau_decomposition} gives a method to obtain the proximal
points of $F^*$ from those of $F$, namely
\[
  \Prox^\tau_{F^*} \p{x} = x - \tau \Prox^{1/\tau}_{F} \p{x/\tau} \qquad
  \text{for all } \tau > 0.
\]

Sometimes the resolvent of the maximally monotone operator $S$ is not easy to
evaluate, but $S$ is of the form $S = \MonotoneOpA + \MonotoneOpB$ where $\MonotoneOpA$ and $\MonotoneOpB$ are maximally
monotone and the resolvents  of $\MonotoneOpA$ and $\MonotoneOpB$ can be evaluated efficiently.
One may then consider approximating $\resolvent{\sigma}{\MonotoneOpA + \MonotoneOpB}$ with $\resolvent{\sigma}{\MonotoneOpA}$ and $\resolvent{\sigma}{\MonotoneOpB}$ (splitting)
\cite{eckstein1989splitting}.
An example when this arises is in convex minimization of an objective that is a sum of two (or more) functions $F
+ G$, like in \eqref{eq:general_form}.
In these cases it is often not possible to compute a closed-form expression for the proximal operator $\Prox^{\sigma}_{F + G}$.
Such problems can be addressed using operator splitting techniques that allow for solving the
problem by only evaluating $\Prox^{\sigma}_{F}$ and $\Prox^{\sigma}_{G}$
\cite{combettes2011proximal}.%
\footnote{In optimization, this operator splitting is sometimes referred to as
  \emph{variable splitting}. The reason for this can be understood by comparing equations
  \eqref{eq:primal_prob_cp} and \eqref{eq:opt_cond} below.}

\subsection{Convex optimization}\label{sec:OptBackground}
Next, we will consider duality and optimality conditions for the problem \eqref{eq:general_form}.
To simplify the notation, we consider the case $m=1$ in \eqref{eq:general_form}, i.e., let $\PrimS$ and $\DualS$ be two Hilbert spaces and
consider the model problem
\begin{equation}\label{eq:primal_prob_cp}
  \min_{x\in \PrimS} \Bigl[ F\p{x} + G\p{Lx} \Bigr],
\end{equation}
where $L\colon \PrimS \to \DualS$ is a continuous linear operator and $F\colon
\PrimS \to \RealExt$ and $G: \DualS \to \RealExt$ are proper, convex and lower
semicontinuous functions. 
Note that \eqref{eq:general_form} is recovered by setting 
\begin{equation}\label{eq:product_space_reduction}
G\p{y} \defeq \sum_{i = 1}^m G_i\p{y_i} 
  \quad\text{for $y = \p{y_1, \ldots, y_m} \in \DualS \defeq \DualS_1 \times \ldots \times \DualS_m$}
\end{equation}
and $Lx \defeq \p{L_1x, \ldots, L_m x}$ for $x\in \PrimS$ in \eqref{eq:primal_prob_cp}.

The dual formulation of the primal problem \eqref{eq:primal_prob_cp} is
\begin{equation}\label{eq:dual_prob_cp}
  \max_{y \in \DualS} \Bigl[ -F^*\p{L^*y} - G^*\p{-y} \Bigr].
\end{equation}
Under suitable conditions the two optimization problems
\eqref{eq:primal_prob_cp} and \eqref{eq:dual_prob_cp} have the same optimal
value \cite[Chapter~15.3]{bauschke2017convex}.
Also note that, since both $F$ and $G$ are proper, convex and lower
semicontinuous functions, $F^{**} =
F$ and $G^{**} = G$ by the Fenchel--Moreau theorem
\cite[Theorem~13.37]{bauschke2017convex}.
Hence, the following primal-dual formulation
\begin{equation}\label{eq:saddle_prob_cp}
\min_{x \in \PrimS} \, \max_{y \in \DualS} \; \Lagr{x}{y} \quad\text{with}\quad \Lagr{x}{y} \defeq \inpr{Lx}{y} + F\p{x} - G^*\p{y}
\end{equation}
(the mapping $\Lagr{\cdot}{\cdot}$ is called the \emph{Lagrangian}) is equivalent to the primal problem.%
\footnote{To see this, note that $\max_{y\in \DualS} \bigl[ \inpr{Lx}{y} - G^*\p{y} \bigr]=
  G^{**}\p{Lx} = G\p{Lx}$.}
In fact, under suitable assumptions it can be shown that if $\p{\bar{x}, \bar{y}}$ is a saddle point
to \eqref{eq:saddle_prob_cp}, then $\bar{x}$ is a
solution to the primal problem \eqref{eq:primal_prob_cp} and $\bar{y}$ is a
solution to the dual problem \eqref{eq:dual_prob_cp} \cite[Proposition 19.20]{bauschke2017convex}.

A necessary optimality condition for the primal-dual formulation
\eqref{eq:saddle_prob_cp} is that the corresponding point $\p{\bar x, \bar y} \in
\PrimS \times \DualS$ be stationarity with
respect to both variables, i.e., that
\begin{equation}\label{eq:primal_dual_solutions}
  L\bar x  \in \partial G^*\p{\bar y} \quad\text{and}\quad \mathord{-}L^*\bar y  \in \partial F\p{\bar x}.
\end{equation}
For later use we note that the first of these conditions can be reformulated as
\begin{align*}
  L\bar x \in \partial G^*\p{\bar y}
  & \quad \iff \quad
    \bar y + \sigma L \bar x \in \bar y + \sigma \partial G^*\p{\bar y}
    = \p{I + \sigma \partial G^*}\p{\bar y} \\
  & \quad \iff \quad
    \bar y = \resolvent{\sigma}{\partial G^*}\p{\bar y + \sigma L \bar x}
    = \Prox^\sigma_{G^*}\p{\bar y + \sigma L \bar x},
\end{align*}
and the second as
\begin{align*}
  -L^* \bar y \in \partial F\p{\bar x}
  & \quad \iff \quad
    \bar x - \tau L^* \bar y \in \bar x + \tau \partial F\p{\bar x}
    = \p{I + \tau \partial F}\p{\bar x} \\
  & \quad \iff \quad
    \bar x = \resolvent{\tau}{\partial F}\p{\bar x - \tau L^* \bar y}
    = \Prox^\tau_{F}\p{\bar x - \tau L^* \bar y}.
\end{align*}
Therefore, an equivalent condition to \eqref{eq:primal_dual_solutions} is
\begin{equation}\label{eq:opt_cond}
\bar y  = \Prox^\sigma_{G^*}\p{\bar y + \sigma L \bar x} 
\quad\text{and}\quad 
\bar x  = \Prox^\tau_{F}\p{\bar x - \tau L^* \bar y}. 
\end{equation}

\subsection{Two splitting algorithms}\label{subsec:few_algorithms}
As mentioned before, there are many different splitting methods available to solve
problems of the form \eqref{eq:general_form}.
For ease of reference, we here mention two popular choices.
The first one, given in \eqref{eq:alg_CP}, is PDHG \cite{chambolle2011first}
\begin{equation}\label{eq:alg_CP}
\begin{aligned}
  & y_{n+1} = \Prox^{\sigma}_{G^*} \p{y_n + \sigma L  v_n}, \\
  & x_{n+1} = \Prox^{\tau}_{F} \p{x_n - \tau L^* y_{n+1}}, \\
  & v_{n+1} = x_{n+1} + \theta \p{x_{n+1} - x_n}.
\end{aligned}
\end{equation}
The second one is the Douglas--Rachford type primal-dual algorithm
\cite{bot2013douglas}, presented in \eqref{eq:alg_DR}
\begin{equation}\label{eq:alg_DR}
\begin{aligned}
  p_n & = \Prox^\tau_F \p{x_n - \tau L^*y_{n}}, \\
  x_{n+1} & = x_n + \lambda_n \p{p_n - x_n}, \\
  q_n & = \Prox^{\sigma}_{G^*} \p{y_{n} + \sigma L\p{2p_n
              - x_n}}, \\
  y_{n+1} & = y_{n} + \lambda_n \p{q_n - y_{n}}.
\end{aligned}
\end{equation}

\section{A new family of optimization solvers}\label{sec:new_solver}
In this section we introduce a new family of optimization algorithms and prove
convergence for a subfamily.
For ease of notation we will consider the simplified optimization problem
\eqref{eq:primal_prob_cp}, but results easily extend to the general case
\eqref{eq:general_form}.

To this end, consider the two algorithms \eqref{eq:alg_CP} and
\eqref{eq:alg_DR}.
Note that they can both be written as
\begin{subequations}\label{eq:2x2_scheme_components}
  \begin{align}
    q_n &= \Prox^\sigma_{G^*} \p{b_{12}y_n
          + b_{11} L \p{\aoneone p_{n-1} + \aonetwo x_{n-1} }},\\
    y_{n+1} &= \sndparamP q_n + \ctwotwo y_n, \\
    p_n &= \Prox^\tau_{F} \p{d_{12} x_n
          + d_{11} L^* \p{\coneone q_n + \conetwo y_n}}, \\
    x_{n+1} &= \fstparamP p_n + \atwotwo x_n,
  \end{align}
\end{subequations}
for suitable values of the coefficients.
More precisely, the PDHG algorithm \eqref{eq:alg_CP} is obtained by setting
\begin{align*}
  \coneone &= 1           & \conetwo &= 0
  & \sndparamP &= 1  & \ctwotwo &= 0 & b_{11} &= \sigma  & b_{12} &= 1 \\
   \aoneone &= 1 + \theta  & \aonetwo &= -\theta
  & \fstparamP &= 1  & \atwotwo &= 0  & d_{11} &= -\tau   & d_{12} &= 1
\intertext{and the Douglas-Rachford algorithm \eqref{eq:alg_DR} by setting}
  \coneone &= \lambda_n  & \conetwo &= 1-\lambda_n  & \sndparamP &= \lambda_n
  & \ctwotwo &= 1-\lambda_n  & b_{11} &= \sigma  & b_{12} &= 1 \\
  \aoneone &= 2          & \aonetwo &= -1           & \fstparamP &= \lambda_n
  & \atwotwo &= 1-\lambda_n  & d_{11} &= -\tau    & d_{12} &= 1.
\end{align*}

We now go on to analyze the scheme \eqref{eq:2x2_scheme_components}. To state
our results as generally as possible, we formulate them for a monotone
inclusion problem that in particular specializes to the optimality conditions in
\eqref{eq:primal_dual_solutions} when the operators are subdifferentials.
The monotone inclusion problem we seek to solve reads as follows:
Let $\PrimS$ and $\DualS$ be two (not necessarily finite-dimensional) Hilbert
spaces, and let $L\colon \PrimS \to \DualS$ be a continuous linear operator.
Let $\MonotoneOpA: \PrimS \rightrightarrows \PrimS$ and $\MonotoneOpB: \DualS \rightrightarrows
\DualS$ be maximally monotone operators. Find a pair $\p{\bar x, \bar y} \in
\PrimS \times \DualS$ such that
\begin{equation}\label{eq:inclusion_problem}
  L\bar x \in \MonotoneOpB^{-1} \bar y 
  \quad\text{and}\quad
  \mathord{-}L^* \bar y \in \MonotoneOpA \bar x.
\end{equation}
In this setting, the scheme \eqref{eq:2x2_scheme_components} generalizes to
\begin{subequations}\label{eq:2x2_scheme}
  \begin{align}
    q_n &= \resolvent{\sigma}{\MonotoneOpB^{-1}}\p{b_{12}y_n + b_{11} L \p{\aoneone p_{n-1} + \aonetwo x_{n-1} }}, \label{eq:2x2_scheme_q} \\
    y_{n+1} &= \sndparamP q_n + \ctwotwo y_n, \label{eq:2x2_scheme_y} \\
    p_n &= \resolvent{\tau}{\MonotoneOpA} \p{d_{12} x_n + d_{11} L^* \p{\coneone q_n + \conetwo y_n}}, \label{eq:2x2_scheme_p} \\
    x_{n+1} &= \fstparamP p_n + \atwotwo x_n. \label{eq:2x2_scheme_x}
  \end{align}
\end{subequations}
We first note that if $\sndparamP = 0$ or $\fstparamP = 0$ the update for either
$y_{n+1}$ or $x_{n+1}$ becomes trivial, and the algorithm will not be globally
convergent to a point fulfilling \eqref{eq:inclusion_problem} in general.
Henceforth we will therefore assume that $\sndparamP$ and $\fstparamP$ are not equal
to $0$, unless the opposite is explicitly stated.

\subsection{Fixed-point analysis}\label{subsect:fixed_point_2x2}
In this section, we give necessary and sufficient conditions for the solution set of
\eqref{eq:inclusion_problem} and the fixed point set of \eqref{eq:2x2_scheme} to
coincide for any choice of $\MonotoneOpA$, $\MonotoneOpB$, and $L$.
To this end, let $\p{\bar q, \bar y, \bar p, \bar x} \in \DualS \times \DualS \times \PrimS \times \PrimS$ be a
fixed point of the iterative scheme \eqref{eq:2x2_scheme} and note that
\eqref{eq:2x2_scheme_y} and \eqref{eq:2x2_scheme_x} gives
\[
  \bar q = \frac{1 - \ctwotwo}{\sndparamP} \bar y
  \quad\text{and}\quad
  \bar p = \frac{1 - \atwotwo}{\fstparamP} \bar x.
\]
Using this, we further get that
\begin{align*}
  \frac{1 - \ctwotwo}{\sndparamP} \bar y
  &= \resolvent{\sigma}{\MonotoneOpB^{-1}} \p{b_{12}\bar y
    + b_{11} L \p{\aoneone \frac{1 - \atwotwo}{\fstparamP} \bar x + \aonetwo \bar x }} \\
  \frac{1 - \atwotwo}{\fstparamP} \bar x
  &= \resolvent{\tau}{\MonotoneOpA} \p{d_{12} \bar x
    + d_{11} L^* \p{\coneone \frac{1 - \ctwotwo}{\sndparamP} \bar y + \conetwo \bar y}}
\end{align*}
The conditions in \eqref{eq:inclusion_problem} can now be re-phrased as 
\[
  \bar y = \resolvent{\sigma}{\MonotoneOpB^{-1}} \p{\bar y + \sigma L\bar x}
  \quad\text{and}\quad
  \bar x = \resolvent{\tau}{\MonotoneOpA} \p{\bar x - \tau L^* \bar y},
\]
and combining the above two equations yields
\begin{equation}\label{eq:two_variables_fp_conditions}
\begin{aligned}
  \sndparamP + \ctwotwo &= 1,
  & b_{12} &= 1,
  & b_{11}\p{\aoneone + \aonetwo} &= \sigma, \\
  \fstparamP + \atwotwo &= 1,
  & d_{12} &= 1,
  & d_{11}\p{\coneone + \conetwo} &= -\tau. 
\end{aligned}
\end{equation}
The conditions in \eqref{eq:two_variables_fp_conditions} are necessary and sufficient,
however, due to the linearity of $L$, the algorithm does not change if we
agree to the normalization
\begin{align*}
  b_{11} &= \sigma,
  & \aoneone + \aonetwo &= 1, \\
  d_{11} &= -\tau,
  & \coneone + \conetwo &= 1.
\end{align*}
If we fix all these conditions, the iteration \eqref{eq:2x2_scheme}
takes the form
\begin{subequations}\label{eq:2x2_fixed}
  \begin{align}
    q_n &= \resolvent{\sigma}{\MonotoneOpB^{-1}} \p{y_n + \sigma L \p{x_{n-1} + \aoneone \p{p_{n-1} - x_{n-1}}}}, \label{eq:2x2_fixed_q} \\
    y_{n+1} &= y_n + \sndparamP \p{q_n - y_n}, \label{eq:2x2_fixed_y} \\
    p_n &= \resolvent{\tau}{\MonotoneOpA} (x_n - \tau L^* \p{y_n + \coneone \p{q_n - y_n}}), \label{eq:2x2_fixed_p} \\
    x_{n+1} &= x_n + \fstparamP \p{p_n -  x_n}. \label{eq:2x2_fixed_x}
  \end{align}
\end{subequations}

\subsection{Convergence analysis}
The following theorem gives sufficient conditions for the weak convergence of the sequence $\p{x_n, y_n}$ generated by \eqref{eq:2x2_fixed} to a point that satisfies \eqref{eq:inclusion_problem}, i.e., a point that solves the monotone inclusion problem.
\begin{theorem}\label{thm:convergence}
  Assume that there is a point that satisfies \eqref{eq:inclusion_problem},
  i.e., the monotone inclusion problem has a solution. Moreover, let 
  \begin{equation}\label{eq:a11andeq:c11}
     \coneone = \sndparamP \quad\text{and}\quad \aoneone = 1 + \frac{\fstparamP}{\sndparamP}.
  \end{equation}
  Assume furthermore that $0 < \sndparamP < 2$, $0 < \fstparamP < 2$ and 
  \begin{equation}\label{eq:sigma_tau_bound}
    \sigma \tau \norm{L}^2 < \frac{\sndparamP^2 \p{2 - \sndparamP} \p{2 - \fstparamP}}{\p{\sndparamP + \fstparamP - \sndparamP \fstparamP}^2} 
    \quad\text{with $\sigma, \tau > 0$.}
  \end{equation}
  Finally, let $\p{q_n, y_n, p_n, x_n}$ be the sequence generated by scheme
  \eqref{eq:2x2_fixed}. Then the following holds:
  \begin{enumerate}[(a)]
    \item \label{item:thm:convergence:primal_summability} 
      $\displaystyle{\sum_{n \geq 0}} \norm{x_n - p_n}^2 < +\infty$ and $\displaystyle{\sum_{n \geq 0}} \norm{x_n - x_{n+1}}^2 < +\infty$.
    \item \label{item:thm:convergence:dual_summability} 
      $\displaystyle{\sum_{n \geq 0}} \norm{y_n - q_n}^2 < +\infty$ and $\displaystyle{\sum_{n \geq 0}} \norm{y_n - y_{n+1}}^2 < +\infty$.
    \item \label{item:thm:convergence:weak_convergence} The sequence $\p{x_n,
        y_n}_n$ converges weakly to a point that satisfies \eqref{eq:inclusion_problem}.
    \item \label{item:thm:convergence:strong_convergence} 
      If $\MonotoneOpA$ is strongly monotone, then there is a unique $\bar x \in \PrimS$
      such that all solutions of \eqref{eq:inclusion_problem} are of the form
      $\p{\bar x, y}$ with some $y\in \DualS$. Moreover, $\displaystyle{\sum_{n=1}^\infty}
      \norm{p_n - \bar x}^2 < +\infty$, in particular $p_n \to \bar x$ strongly.
      \\
      If $\MonotoneOpB^{-1}$ is strongly monotone, then there is a unique $\bar y \in \DualS$
      such that all solutions of \eqref{eq:inclusion_problem} are of the form
      $\p{x, \bar y}$ with some $x \in \PrimS$. Moreover,
      $\displaystyle{\sum_{n=1}^\infty} \norm{q_{n+1} - \bar y}^2 < +\infty$, in
      particular $q_n \to \bar y$ strongly.
  \end{enumerate}
\end{theorem}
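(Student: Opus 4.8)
The plan is to recast the iteration \eqref{eq:2x2_fixed} as a relaxed resolvent (preconditioned proximal-point) iteration for a single maximally monotone operator and then run the classical Krasnosel'skii--Mann / Fej\'er-monotonicity machinery. First I would collect the primal and dual variables into one iterate $u_n = \p{x_n, y_n} \in \PrimS \times \DualS$ and observe that \eqref{eq:inclusion_problem} is exactly $0 \in \mathcal{S}\bar u$, where
\[
  \mathcal{S}\p{x,y} = \p{\MonotoneOpA x + L^* y,\; \MonotoneOpB^{-1} y - L x}.
\]
This $\mathcal{S}$ is maximally monotone, being the sum of the block-diagonal maximally monotone operator $\p{\MonotoneOpA, \MonotoneOpB^{-1}}$ and the bounded skew-adjoint (hence everywhere-defined and maximally monotone) coupling $\p{x,y} \mapsto \p{L^* y, -L x}$. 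The fixed-point analysis of Section \ref{subsect:fixed_point_2x2} already guarantees that, under the normalization leading to \eqref{eq:2x2_fixed}, the fixed points of the scheme coincide with $\text{zer}\p{\mathcal{S}}$, so it only remains to establish convergence to such a point.

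The heart of the argument is a single energy (Lyapunov) inequality. I would use firm nonexpansiveness of the two resolvents: writing the defining inclusions $\p{w_n - q_n}/\sigma \in \MonotoneOpB^{-1} q_n$ and $\p{z_n - p_n}/\tau \in \MonotoneOpA p_n$ for the arguments $w_n, z_n$ of \eqref{eq:2x2_fixed_q} and \eqref{eq:2x2_fixed_p}, and testing them against the solution relations $L\bar x \in \MonotoneOpB^{-1}\bar y$ and $-L^*\bar y \in \MonotoneOpA\bar x$ by monotonicity, yields two inequalities in $q_n - \bar y$ and $p_n - \bar x$. Here the choice \eqref{eq:a11andeq:c11} is exactly what makes the dual argument collapse to $z_n = x_n - \tau L^* y_{n+1}$ and turns the predictor in \eqref{eq:2x2_fixed_q} into an extrapolation of $x_n - x_{n-1}$ with coefficient $\p{\sndparamP + \fstparamP - \sndparamP\fstparamP}/\sndparamP$, precisely the quantity in the denominator of \eqref{eq:sigma_tau_bound}. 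Substituting $q_n - y_n = \p{y_{n+1} - y_n}/\sndparamP$ and $p_n - x_n = \p{x_{n+1} - x_n}/\fstparamP$ and adding the two inequalities, I expect an estimate of the form
\[
  \Phi_{n+1} \le \Phi_n - r_n,
\]
valid for every solution $\bar u$, where $\Phi_n$ is a nonnegative quadratic in $\p{x_n - \bar x,\, y_n - \bar y,\, x_n - x_{n-1}}$ (the dependence on $x_n - x_{n-1}$ stemming from the one-step memory in \eqref{eq:2x2_fixed_q}) and $r_n$ is a quadratic in $\norm{x_n - p_n}$, $\norm{y_n - q_n}$ and the increments. The main obstacle, and the only genuinely technical step, is the bookkeeping showing that \eqref{eq:sigma_tau_bound} is exactly the condition under which $\Phi_n$ is a positive-definite form (so $\Phi_n \ge 0$ and $\norm{u_n - \bar u}$ is controlled) \emph{and} the residual $r_n$ is nonnegative: the single cross-term $\inpr{L\p{x_n - x_{n-1}}}{y_n - \bar y}$ must be absorbed by a weighted Young inequality, and it is this absorption that produces the factors $\p{2 - \sndparamP}$, $\p{2 - \fstparamP}$, $\sndparamP^2$ and forces $0 < \sndparamP, \fstparamP < 2$.

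Once the Lyapunov inequality is in place, the remaining conclusions follow by standard arguments. Telescoping shows $\sum_n r_n < +\infty$ and that $\p{\Phi_n}$ is bounded and nonincreasing; since $r_n$ dominates $\norm{x_n - p_n}^2$, $\norm{y_n - q_n}^2$ and the squared increments up to positive constants, this yields \eqref{item:thm:convergence:primal_summability} and \eqref{item:thm:convergence:dual_summability}, and in particular $\norm{x_n - p_n} \to 0$, $\norm{y_n - q_n} \to 0$, and $x_n - x_{n-1} \to 0$. For \eqref{item:thm:convergence:weak_convergence} I would invoke Opial's lemma: Fej\'er monotonicity guarantees that $\norm{u_n - \bar u}$ converges for every solution $\bar u$, while boundedness yields weak cluster points; passing to the limit along a weakly convergent subsequence in the resolvent inclusions, using the vanishing of the increments and residuals together with the sequential weak-strong closedness of the graphs of $\MonotoneOpA$ and $\MonotoneOpB^{-1}$ (demiclosedness of maximally monotone operators), shows every weak cluster point lies in $\text{zer}\p{\mathcal{S}}$, whence the whole sequence converges weakly. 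Finally, for \eqref{item:thm:convergence:strong_convergence}, strong monotonicity of $\MonotoneOpA$ adds a term $\mu\norm{p_n - \bar x}^2$ to the favourable side of the tested inequality, so $\mu\norm{p_n - \bar x}^2$ becomes summable and $p_n \to \bar x$ strongly; uniqueness of the primal component follows by subtracting the solution relations of two solutions and using strong monotonicity of $\MonotoneOpA$ to force equality of the primal parts. The case of strongly monotone $\MonotoneOpB^{-1}$ is entirely symmetric and gives $q_n \to \bar y$.
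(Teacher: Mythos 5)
Your proposal follows essentially the same route as the paper's proof: the paper also tests the two inclusions \eqref{eq:iteration_inclusion_B} and \eqref{eq:iteration_inclusion_A} (the latter available precisely because \eqref{eq:a11andeq:c11} collapses the dual argument to $x_n - \tau L^* y_{n+1}$) against the solution relations by monotonicity, obtains the decrease inequality
\[
  Q_1\p{x_{n+1} - \bar x, y_{n+2} - \bar y} \leq Q_1\p{x_n - \bar x, y_{n+1} - \bar y} - Q_2\p{p_n - x_n, q_{n+1} - y_{n+1}},
\]
where positive definiteness of the quadratic forms $Q_1, Q_2$ is exactly condition \eqref{eq:sigma_tau_bound} (Lemma~\ref{lem:quadratic_forms}, Proposition~\ref{lem:stability}), telescopes to get parts (a) and (b), uses Fej\'er monotonicity with respect to the norm $\sqrt{Q_1\p{\cdot}}$ for part (c), and strengthens the inequality by $\mu_1\norm{p_n - \bar x}^2$ (resp.\ $\mu_2 \norm{q_{n+1} - \bar y}^2$) for part (d). The only cosmetic difference is bookkeeping: the paper pairs $x_n$ with the shifted dual iterate $y_{n+1}$ inside $Q_1$ instead of carrying the increment $x_n - x_{n-1}$ as a third slot of the Lyapunov function.

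However, one step of your sketch would fail as literally stated: identifying weak cluster points as solutions via \enquote{sequential weak-strong closedness of the graphs of $\MonotoneOpA$ and $\MonotoneOpB^{-1}$} separately does not work. In the inclusion $\frac{x_{n_k} - p_{n_k}}{\tau} - L^* y_{n_k+1} \in \MonotoneOpA p_{n_k}$, the operator element converges only \emph{weakly} (to $-L^* y_\infty$), not strongly, and the same happens on the dual side; demiclosedness of the individual graphs therefore cannot be invoked. This is exactly why the paper appeals to \cite[Proposition~2.4]{alotaibi2014solving}, a joint primal--dual demiclosedness result. Your own setup already contains the repair: work with the single operator $\mathcal{S}\p{x, y} = \p{\MonotoneOpA x + L^* y, \, \MonotoneOpB^{-1} y - Lx}$, which, as you observed, is maximally monotone (sum of a maximally monotone operator and a bounded skew-adjoint one). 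Rearranging the two iteration inclusions gives
\[
  \p{\frac{x_{n_k} - p_{n_k}}{\tau} + L^*\p{q_{n_k+1} - y_{n_k+1}}, \;\; \frac{y_{n_k+1} - q_{n_k+1}}{\sigma} + \frac{\fstparam}{\sndparam} L\p{p_{n_k} - x_{n_k}}} \in \mathcal{S}\p{p_{n_k}, q_{n_k+1}},
\]
and by parts (a) and (b) this element converges strongly to zero, while $\p{p_{n_k}, q_{n_k+1}} \weakto \p{x_\infty, y_\infty}$. Demiclosedness of the \emph{single} maximally monotone operator $\mathcal{S}$ then yields $0 \in \mathcal{S}\p{x_\infty, y_\infty}$, i.e., \eqref{eq:inclusion_problem}. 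With this correction, your outline is sound and coincides with the paper's argument.
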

By rewriting with \eqref{eq:a11andeq:c11}, the iteration
\eqref{eq:2x2_fixed} takes the following form:
\begin{algorithm}\label{alg:mon_op}
  Choose parameters $\sigma > 0$, $\tau > 0$ and $\sndparam \in \Real$, $\fstparam \in
  \Real$ and starting points $x_0 \in \PrimS$, $x_1 \in \PrimS$, $p_0 \in
  \PrimS$, $y_1 \in \DualS$. For all $n = 1, 2, \ldots$, calculate
  \begin{subequations}\label{eq:stability}
    \begin{align}
      q_n &= \resolvent{\sigma}{\MonotoneOpB^{-1}} \p{y_n + \sigma L \p{p_{n-1} + \frac{\fstparam}{\sndparam} \p{p_{n-1} - x_{n-1}}}}, \label{eq:stability_q}\\
      y_{n+1} &= y_n + \sndparam \p{q_n - y_n}, \label{eq:stability_y} \\
      p_n &= \resolvent{\tau}{\MonotoneOpA} \p{x_n - \tau L^* y_{n+1}}, \label{eq:stability_p} \\
      x_{n+1} &= x_n + \fstparam \p{p_n -  x_n}. \label{eq:stability_x}
    \end{align}
  \end{subequations}
  Then, $x_n \weakto \bar x$, $p_n \weakto \bar x$, $y_n \weakto \bar y$, and $q_n
  \weakto \bar y$, where $\p{\bar x, \bar y}$ is a solution of
  \eqref{eq:inclusion_problem}, provided that $0 < \sndparam < 2$, $0 < \fstparam < 2$
  and \eqref{eq:sigma_tau_bound} are satisfied.
\end{algorithm}
The remainder of the convergence analysis will therefore refer to scheme
\eqref{eq:stability}.
The proof of Theorem \ref{thm:convergence} rests upon a number of technical results and is given in Section~\ref{subsec:ProofOfThm}. 
An immediate corollary is the convergence of the primal-dual
Douglas--Rachford method with constant relaxation \cite{bot2013douglas}.
\begin{corollary}
  Let $\sigma \tau \norm{L} < 1$ and $0 < \lambda < 2$. Then, for the iteration
  \begin{align*}
    q_n &= \resolvent{\sigma}{\MonotoneOpB^{-1}} \p{y_n + \sigma L\p{2p_{n-1} - x_{n-1}}}, \\
    y_{n+1} &= y_n + \lambda\p{q_n - y_n}, \\
    p_n &= \resolvent{\tau}{\MonotoneOpA} \p{x_n - \tau L^* y_{n+1}}, \\
    x_{n+1} &= x_n + \lambda \p{p_n - x_n},
  \end{align*}
  the sequence $\p{x_n, y_n}_n$ converges weakly to a point that satisfies \eqref{eq:inclusion_problem}.
\end{corollary}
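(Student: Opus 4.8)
The plan is to recognize the stated Douglas--Rachford iteration as the single special case of Algorithm~\ref{alg:mon_op} (equivalently, of scheme~\eqref{eq:stability}) obtained by choosing $\sndparam = \fstparam = \lambda$, and then to read off the conclusion directly from Theorem~\ref{thm:convergence}. Since scheme~\eqref{eq:stability} already has the relations~\eqref{eq:a11andeq:c11} (namely $\coneone = \sndparamP$ and $\aoneone = 1 + \fstparamP/\sndparamP$) baked into it, the only work is to verify that this one parameter choice reproduces all four update rules and meets the hypotheses of the theorem.

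Concretely, I would substitute $\sndparam = \fstparam = \lambda$ into~\eqref{eq:stability}. In the $q_n$-update~\eqref{eq:stability_q} the coefficient $\fstparam/\sndparam$ then becomes $1$, so its argument collapses to $y_n + \sigma L\p{2 p_{n-1} - x_{n-1}}$, matching the first line of the corollary. The relaxation steps~\eqref{eq:stability_y} and~\eqref{eq:stability_x} become $y_{n+1} = y_n + \lambda\p{q_n - y_n}$ and $x_{n+1} = x_n + \lambda\p{p_n - x_n}$, while the $p_n$-update~\eqref{eq:stability_p} already reads $p_n = \resolvent{\tau}{\MonotoneOpA}\p{x_n - \tau L^* y_{n+1}}$. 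Thus all four lines coincide with the Douglas--Rachford scheme, and no further constraint is introduced by the reduction.

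It then remains to check the hypotheses. The conditions $0 < \sndparam < 2$ and $0 < \fstparam < 2$ are both exactly $0 < \lambda < 2$. For the step-length bound~\eqref{eq:sigma_tau_bound} I would evaluate its right-hand side at $\sndparamP = \fstparamP = \lambda$: the numerator becomes $\lambda^2 \p{2 - \lambda}^2$ and the denominator $\p{2\lambda - \lambda^2}^2 = \lambda^2 \p{2 - \lambda}^2$, so the whole fraction equals $1$. Hence~\eqref{eq:sigma_tau_bound} reduces precisely to $\sigma \tau \norm{L}^2 < 1$, and with all hypotheses satisfied, part~\ref{item:thm:convergence:weak_convergence} of Theorem~\ref{thm:convergence} yields weak convergence of $\p{x_n, y_n}_n$ to a point satisfying~\eqref{eq:inclusion_problem}.

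The main obstacle here is bookkeeping rather than genuine mathematics: one must confirm that the embedded identities~\eqref{eq:a11andeq:c11} are indeed the ones used to pass from~\eqref{eq:2x2_fixed} to~\eqref{eq:stability}, so that nothing is hidden in the affine recombination of the $q_n$-update. The single substitution $\sndparam = \fstparam = \lambda$ makes the corollary immediate once the fraction in~\eqref{eq:sigma_tau_bound} is seen to collapse to $1$; the only point worth double-checking is the power of $\norm{L}$ in the step-length assumption, which the computation above shows should read $\sigma \tau \norm{L}^2 < 1$.
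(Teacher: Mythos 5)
Your proposal is correct and matches the paper's own proof exactly: set $\sndparam = \fstparam = \lambda$ in Theorem~\ref{thm:convergence} and observe that the right-hand side of \eqref{eq:sigma_tau_bound} collapses to $1$, so the hypothesis becomes $\sigma \tau \norm{L}^2 < 1$. Your closing remark about the exponent is also well taken: the corollary as printed assumes $\sigma \tau \norm{L} < 1$, but the paper's proof (like yours) actually verifies the condition $\sigma \tau \norm{L}^2 < 1$, so the first power in the statement appears to be a typo.
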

\begin{proof}
  Set $\sndparam = \fstparam = \lambda$ in Theorem \ref{thm:convergence} and observe
  that \eqref{eq:sigma_tau_bound} reduces to $\sigma \tau \norm{L}^2 < 1$.
\end{proof}

\subsubsection{Proof of Theorem~\ref{thm:convergence}}\label{subsec:ProofOfThm}
For the proof, we define notions of distance $Q_1$ and $Q_2$ on the space
$\PrimS \times \DualS$ of pairs of primal and dual variables
(Lemma~\ref{lem:quadratic_forms}). Next, we show that the distance (in terms of
$Q_1$) between the iterates and the set of solutions of
\eqref{eq:inclusion_problem} decreases (Proposition~\ref{lem:stability}). This
property is also known as \emph{Fej\'er monotonicity} \cite[Chapter
5]{bauschke2017convex}. Proposition~\ref{lem:stability_strongly_monotone}
improves the statement of Proposition~\ref{lem:stability} for strongly monotone
operators. The proof of Theorem \ref{thm:convergence} is completed by showing
that any weak sequential cluster point of the iteration sequence is a solution
to \eqref{eq:inclusion_problem}.

We start with some simple inequalities between real numbers. In particular,
Lemma~\ref{lem:estimation}~\eqref{item:lem:estimation:1}
shows that we do not divide by zero in \eqref{eq:sigma_tau_bound}.

\begin{lemma}\label{lem:estimation}
  Let $0 < \sndparam < 2$ and $0 < \fstparam < 2$. Then
  \begin{enumerate}[(a)]
  \item \label{item:lem:estimation:1} $\sndparam + \fstparam > \sndparam \fstparam$ and
  \item \label{item:lem:estimation:2} $\displaystyle\frac{\sndparam\fstparam\p{2 - \sndparam}\p{2 - \fstparam}}{\p{\sndparam + \fstparam - \sndparam
        \fstparam}^2} \leq 1$.
  \end{enumerate}
\end{lemma}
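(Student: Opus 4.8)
The plan is to reduce both parts to elementary algebraic identities via the substitution $s = 1 - \sndparam$ and $t = 1 - \fstparam$, which by the hypotheses $0 < \sndparam < 2$ and $0 < \fstparam < 2$ satisfy $\abs{s} < 1$ and $\abs{t} < 1$. The key observation, valid throughout, is that this substitution linearizes the recurring factor: since $(1 - \sndparam)(1 - \fstparam) = st$, one has
\[
  \sndparam + \fstparam - \sndparam\fstparam = 1 - (1 - \sndparam)(1 - \fstparam) = 1 - st.
\]

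For part (a), I would simply invoke this identity. Because $\abs{s} < 1$ and $\abs{t} < 1$, the product obeys $\abs{st} < 1$, hence $st < 1$ and $1 - st > 0$, which is exactly the claimed inequality $\sndparam + \fstparam > \sndparam\fstparam$. This also shows the denominator in \eqref{eq:sigma_tau_bound} and in part (b) is strictly positive, so no division by zero occurs.

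For part (b), I would first use part (a) to note that the denominator $\p{\sndparam + \fstparam - \sndparam\fstparam}^2$ is strictly positive, so the claim is equivalent to
\[
  \sndparam\fstparam\p{2 - \sndparam}\p{2 - \fstparam} \le \p{\sndparam + \fstparam - \sndparam\fstparam}^2.
\]
Applying the same substitution together with the factorizations $\sndparam\p{2 - \sndparam} = \p{1 - s}\p{1 + s} = 1 - s^2$ and $\fstparam\p{2 - \fstparam} = 1 - t^2$, and again $\sndparam + \fstparam - \sndparam\fstparam = 1 - st$, this becomes
\[
  \p{1 - s^2}\p{1 - t^2} \le \p{1 - st}^2.
\]
Expanding both sides, the difference is a perfect square,
\[
  \p{1 - st}^2 - \p{1 - s^2}\p{1 - t^2} = \p{s - t}^2 \ge 0,
\]
which settles the inequality, with equality precisely when $s = t$, i.e.\ $\sndparam = \fstparam$.

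The argument is entirely elementary, and I do not anticipate any genuine obstacle; the only real content is spotting the substitution $s = 1 - \sndparam$, $t = 1 - \fstparam$ that simultaneously collapses the factor $\sndparam + \fstparam - \sndparam\fstparam$ into $1 - st$ and the numerator into $\p{1 - s^2}\p{1 - t^2}$. After that, part (a) is just $\abs{st} < 1$ and part (b) is just $\p{s - t}^2 \ge 0$.
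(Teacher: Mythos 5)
Your proof is correct, and it takes a genuinely different route from the paper's. The paper works directly in the original variables: for part (a) it notes $\sndparam > \tfrac{1}{2}\sndparam^2$ and $\fstparam > \tfrac{1}{2}\fstparam^2$, hence $\sndparam + \fstparam > \tfrac{1}{2}\p{\sndparam^2 + \fstparam^2} \geq \sndparam\fstparam$; for part (b) it expands $\sndparam\fstparam\p{2-\sndparam}\p{2-\fstparam} = 4\sndparam\fstparam - 2\sndparam^2\fstparam - 2\sndparam\fstparam^2 + \sndparam^2\fstparam^2$ and applies $2\sndparam\fstparam \leq \sndparam^2 + \fstparam^2$ to reassemble the right-hand side as $\p{\sndparam + \fstparam - \sndparam\fstparam}^2$. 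Your substitution $s = 1 - \sndparam$, $t = 1 - \fstparam$ instead collapses the recurring quantities into $\sndparam + \fstparam - \sndparam\fstparam = 1 - st$ and $\sndparam\p{2-\sndparam} = 1 - s^2$, reducing part (a) to $\abs{st} < 1$ and part (b) to the identity $\p{1-st}^2 - \p{1-s^2}\p{1-t^2} = \p{s-t}^2$. The two arguments are secretly resting on the same fact, since $s - t = \fstparam - \sndparam$, so your perfect square $\p{s-t}^2 \geq 0$ is exactly the paper's $2\sndparam\fstparam \leq \sndparam^2 + \fstparam^2$; but your change of variables buys structural clarity: it unifies both parts under one substitution, turns the expansion into an identity check rather than a term-by-term estimate, and exhibits the equality case $\sndparam = \fstparam$ in part (b), which the paper's proof leaves implicit. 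The paper's version, in exchange, avoids any change of variables and keeps the computation in the parameters that appear in the convergence condition \eqref{eq:sigma_tau_bound}.
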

\begin{proof}
  By assumption, $\sndparam \p{2 - \sndparam} > 0$, i.e., $\sndparam > \frac{1}{2}
  \sndparam^2$, and the same holds for $\fstparam$. Therefore,
  \[
    \sndparam + \fstparam > \frac{1}{2} \sndparam^2 + \frac{1}{2} \fstparam^2 \geq \sndparam
    \fstparam,
  \]
  whence \eqref{item:lem:estimation:1}.
  
  For \eqref{item:lem:estimation:2}, use the inequality $2\sndparam \fstparam \leq \sndparam^2 + \fstparam^2$ in
  \begin{align*}
    \sndparam \fstparam\p{2 - \sndparam}\p{2 - \fstparam}
    &= 4\sndparam\fstparam - 2\sndparam^2 \fstparam - 2\sndparam \fstparam^2 + \sndparam^2 \fstparam^2 \\
    & \leq \sndparam^2 + \fstparam^2 + 2\sndparam \fstparam - 2\sndparam^2 \fstparam - 2\sndparam \fstparam^2 + \sndparam^2 \fstparam^2 \\
    &= \p{\sndparam + \fstparam - \sndparam \fstparam}^2.& \qedhere
  \end{align*}
\end{proof}

\begin{lemma}\label{lem:quadratic_forms}
  Define the quadratic
  forms $Q_1, Q_2\colon \PrimS \times \DualS \to \Real$ by
  \begin{align*}
    Q_1\p{x, y} &= \frac{1}{2\tau \fstparam}\norm{x}^2 + \frac{1}{2\sigma \sndparam}
                  \norm{y}^2 - \frac{1}{\sndparam} \inpr{y}{Lx}, \\
    Q_2\p{x, y} &= \frac{2 - \fstparam}{2\tau} \norm{x}^2 + \frac{2 - \sndparam}{2\sigma} \norm{y}^2 - \frac{\sndparam + \fstparam - \sndparam\fstparam}{\sndparam} \inpr{y}{Lx}
  \end{align*}
  for all $x\in \PrimS$ and $y\in \DualS$. 
  Under the assumptions in Theorem~\ref{thm:convergence}, there exist $C_1, C_2, D_1, D_2 > 0$ such that
  \[
    Q_i\p{x, y} \geq C_i \norm{x}^2 \quad\text{and}\quad Q_i\p{x, y} \geq D_i \norm{y}^2
  \]
  for all $x\in \PrimS$, $y\in \DualS$ and $i=1,2$.
\end{lemma}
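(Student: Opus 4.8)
The plan is to regard each $Q_i$ as a quadratic form of the generic shape $\alpha_i\norm{x}^2 + \beta_i\norm{y}^2 - \gamma_i\inpr{y}{Lx}$ and to prove it is coercive by the standard discriminant criterion. Reading off coefficients, $Q_1$ has $\alpha_1 = \tfrac{1}{2\tau\fstparam}$, $\beta_1 = \tfrac{1}{2\sigma\sndparam}$, $\gamma_1 = \tfrac{1}{\sndparam}$, while $Q_2$ has $\alpha_2 = \tfrac{2-\fstparam}{2\tau}$, $\beta_2 = \tfrac{2-\sndparam}{2\sigma}$, $\gamma_2 = \tfrac{\sndparam+\fstparam-\sndparam\fstparam}{\sndparam}$. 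The hypotheses $0 < \sndparam < 2$ and $0 < \fstparam < 2$ make all six coefficients strictly positive, where positivity of $\gamma_2$ uses Lemma~\ref{lem:estimation}\eqref{item:lem:estimation:1}.

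Next I would bound the indefinite cross term from below by Cauchy--Schwarz together with the operator norm, $\inpr{y}{Lx} \leq \norm{L}\norm{x}\norm{y}$, obtaining
\[
  Q_i\p{x,y} \geq \alpha_i\norm{x}^2 + \beta_i\norm{y}^2 - \gamma_i\norm{L}\norm{x}\norm{y}.
\]
The right-hand side is a scalar quadratic form in the pair $\p{\norm{x}, \norm{y}}$ whose associated symmetric matrix has diagonal entries $\alpha_i, \beta_i$ and off-diagonal entry $-\tfrac{1}{2}\gamma_i\norm{L}$. Such a form is positive definite exactly when $\gamma_i^2\norm{L}^2 < 4\alpha_i\beta_i$, and in that case its smallest eigenvalue $\lambda_i > 0$ gives $Q_i\p{x,y} \geq \lambda_i\p{\norm{x}^2 + \norm{y}^2}$; choosing $C_i = D_i = \lambda_i$ then yields both asserted bounds simultaneously.

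It remains to check the discriminant inequality $\gamma_i^2\norm{L}^2 < 4\alpha_i\beta_i$ for $i = 1, 2$. For $Q_2$ this is a direct rearrangement and recovers the hypothesis \eqref{eq:sigma_tau_bound} verbatim. For $Q_1$ the same computation reduces to the cleaner condition $\sigma\tau\norm{L}^2 < \sndparam/\fstparam$, and this is the one step requiring thought: it is \emph{not} immediate from \eqref{eq:sigma_tau_bound}, but follows by chaining \eqref{eq:sigma_tau_bound} with Lemma~\ref{lem:estimation}\eqref{item:lem:estimation:2}, which asserts precisely that the right-hand side of \eqref{eq:sigma_tau_bound} is bounded above by $\sndparam/\fstparam$. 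I expect this linkage --- seeing that the $Q_1$ bound is a consequence of the $Q_2$ bound via Lemma~\ref{lem:estimation} --- to be the only nonroutine point; the remainder is Cauchy--Schwarz and an elementary $2\times 2$ eigenvalue estimate.
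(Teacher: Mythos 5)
Your proof is correct and in substance identical to the paper's: both arguments reduce the coercivity of $Q_1$ and $Q_2$ to exactly the two conditions $\sigma\tau\norm{L}^2 < \sndparam/\fstparam$ and \eqref{eq:sigma_tau_bound}, and both settle the first condition by the same rearrangement of Lemma~\ref{lem:estimation}~\eqref{item:lem:estimation:2}. The only difference is cosmetic: the paper completes squares directly in the Hilbert space, which yields explicit separate constants $C_i$ and $D_i$, whereas you pass to a scalar $2\times 2$ form via Cauchy--Schwarz and take $C_i = D_i$ to be its least eigenvalue---the same discriminant computation in different clothing.
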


\begin{proof}
  We can rewrite
  \begin{align*}
    Q_1\p{x, y} &= \frac{1}{2\sigma \sndparam} \norm{y - \sigma Lx}^2 + \frac{1}{2\tau \fstparam} \norm{x}^2 - \frac{\sigma}{2\sndparam} \norm{Lx}^2, \\
    Q_1\p{x, y} &= \frac{1}{2\tau \fstparam} \norm{x - \frac{\fstparam \tau}{\sndparam} L^* y}^2 + \frac{1}{2\sigma \sndparam} \norm{y}^2 - \frac{\fstparam \tau}{2\sndparam^2} \norm{L^* y}^2
  \end{align*}
  and
  \begin{align*}
    Q_2\p{x, y} &= \frac{2 - \sndparam}{2\sigma} \norm{y - \frac{\sigma\p{\sndparam + \fstparam - \sndparam\fstparam}}{\sndparam\p{2-\sndparam}} Lx}^2 \\
    &\qquad + \frac{2 -\fstparam}{2\tau} \norm{x}^2 - \frac{\sigma\p{\sndparam + \fstparam - \sndparam\fstparam}^2}{2\sndparam^2\p{2-\sndparam}} \norm{Lx}^2, \\
    Q_2\p{x, y} &= \frac{2 - \fstparam}{2\tau} \norm{x - \frac{\tau\p{\sndparam + \fstparam - \sndparam \fstparam}}{\sndparam \p{2 - \fstparam}} L^* y}^2 \\
    &\qquad + \frac{2 - \sndparam}{2\sigma} \norm{y}^2 - \frac{\tau\p{\sndparam + \fstparam - \sndparam \fstparam}^2}{2\sndparam^2 \p{2 - \fstparam}} \norm{L^*y}^2.
  \end{align*}
  From this, the assertion of the lemma is clear with the quantities
  \begin{align*}
    C_1 &= \frac{1}{2\tau \fstparam} - \frac{\sigma}{2\sndparam} \norm{L}^2 = \frac{\sndparam - \fstparam\sigma \tau \norm{L}^2}{2\tau \sndparam \fstparam}, \\
    D_1 &= \frac{1}{2\sigma \sndparam} - \frac{\fstparam \tau}{2\sndparam^2} \norm{L}^2 = \frac{\sndparam - \fstparam \sigma \tau \norm{L}^2}{2\sigma \sndparam^2}, \\
    C_2 &= \frac{2 -\fstparam}{2\tau} - \frac{\sigma\p{\sndparam + \fstparam - \sndparam\fstparam}^2}{2\sndparam^2\p{2 - \sndparam}} \norm{L}^2 \\
        &= \frac{\sndparam^2\p{2 - \sndparam} \p{2 - \fstparam} - \p{\sndparam + \fstparam - \sndparam\fstparam}^2 \sigma\tau \norm{L}^2}{2\tau \sndparam^2\p{2 - \sndparam}}, \\
    D_2 &= \frac{2 - \sndparam}{2\sigma} - \frac{\tau\p{\sndparam + \fstparam - \sndparam \fstparam}^2}{2\sndparam^2 \p{2 - \fstparam}} \norm{L}^2 \\
    &= \frac{\sndparam^2 \p{2 - \sndparam} \p{2 - \fstparam} - \p{\sndparam + \fstparam - \sndparam \fstparam}^2\sigma\tau \norm{L}^2}{2\sigma \sndparam^2 \p{2 - \fstparam}}
  \end{align*}
  provided that the numerators are positive, i.e.,
  \begin{align*}
    \sigma \tau \norm{L}^2 &< \min\set{\frac{\sndparam}{\fstparam}, \frac{\sndparam^2 \p{2 - \sndparam} \p{2 - \fstparam}}{\p{\sndparam + \fstparam - \sndparam \fstparam}^2}}.
  \end{align*}
  Now, by Lemma~\ref{lem:estimation}, the minimum is always attained by the second value, and
  positivity is guaranteed by \eqref{eq:sigma_tau_bound}.
\end{proof}

\begin{proposition}\label{lem:stability}
  Define $Q_1$ and $Q_2$ as in Lemma~\ref{lem:quadratic_forms}, let $\p{\bar x, \bar y} \in \PrimS \times \DualS$ satisfy \eqref{eq:inclusion_problem}, 
  and let the sequence $\p{q_n, y_n, p_n, x_n}$ be generated by scheme \eqref{eq:stability}. 
  Under the assumptions in Theorem~\ref{thm:convergence}, we have for all $n\geq 1$
  \begin{align*}
    Q_1\p{x_{n+1} - \bar x, y_{n+2} - \bar y} - Q_1\p{x_n - \bar x, y_{n+1} - \bar y} \leq -Q_2\p{p_n - x_n, q_{n+1} - y_{n+1}}.
  \end{align*}
\end{proposition}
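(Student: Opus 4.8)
The plan is to derive the claimed one-step estimate from the monotonicity of $\MonotoneOpA$ and $\MonotoneOpB^{-1}$, tested against a solution $\p{\bar x, \bar y}$ of \eqref{eq:inclusion_problem}, and then to reorganize the resulting inner products into the quadratic forms $Q_1$ and $Q_2$ using only elementary norm identities.

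First I would translate the two resolvent evaluations in \eqref{eq:stability} into monotone inclusions. Since $\resolvent{\tau}{\MonotoneOpA} = \p{\Id + \tau \MonotoneOpA}^{-1}$, the update \eqref{eq:stability_p} gives $\frac{1}{\tau}\p{x_n - p_n} - L^* y_{n+1} \in \MonotoneOpA p_n$, and \eqref{eq:stability_q} taken at index $n+1$ (whose resolvent argument is $y_{n+1} + \sigma L w_n$ with $w_n = p_n + \frac{\fstparam}{\sndparam}\p{p_n - x_n}$, using \eqref{eq:a11andeq:c11}) gives $\frac{1}{\sigma}\p{y_{n+1} - q_{n+1}} + L w_n \in \MonotoneOpB^{-1} q_{n+1}$. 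The solution satisfies $\mathord{-}L^*\bar y \in \MonotoneOpA \bar x$ and $L\bar x \in \MonotoneOpB^{-1}\bar y$.

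Then I would write the monotonicity inequality for $\MonotoneOpA$ between $p_n$ and $\bar x$, and the one for $\MonotoneOpB^{-1}$ between $q_{n+1}$ and $\bar y$, obtaining two inequalities each asserting nonnegativity of an inner product. Using the iterate relations $p_n - x_n = \frac{1}{\fstparam}\p{x_{n+1} - x_n}$ and $q_{n+1} - y_{n+1} = \frac{1}{\sndparam}\p{y_{n+2} - y_{n+1}}$ from \eqref{eq:stability_x} and \eqref{eq:stability_y}, I would rewrite $p_n - \bar x$ and $q_{n+1} - \bar y$ in terms of consecutive iterates, and apply the identity $\inpr{a}{b - a} = \frac{1}{2}\norm{b}^2 - \frac{1}{2}\norm{a}^2 - \frac{1}{2}\norm{b - a}^2$ to the diagonal terms $\frac{1}{\tau}\inpr{p_n - \bar x}{x_n - p_n}$ and $\frac{1}{\sigma}\inpr{q_{n+1} - \bar y}{y_{n+1} - q_{n+1}}$. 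These expand into the telescoping $\norm{\cdot}^2$ parts of $Q_1$ together with exactly the diagonal $\norm{p_n - x_n}^2$ and $\norm{q_{n+1} - y_{n+1}}^2$ contributions of $-Q_2$ (the factors $2-\fstparam$ and $2-\sndparam$ arise from $\frac{1}{\fstparam^2} - \frac{1}{2\fstparam}$ and its dual after clearing the $\fstparam^2$, $\sndparam^2$ from the squared increments).

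Finally I would add the two inequalities and add to both sides the mixed quantity $-\frac{1}{\sndparam}\p{\inpr{y_{n+2} - \bar y}{L\p{x_{n+1} - \bar x}} - \inpr{y_{n+1} - \bar y}{L\p{x_n - \bar x}}}$, which completes the left-hand side to $Q_1\p{x_{n+1} - \bar x, y_{n+2} - \bar y} - Q_1\p{x_n - \bar x, y_{n+1} - \bar y}$. The hard part is the bookkeeping of the remaining $L$- and $L^*$-coupled inner products: one must verify that all cross terms collapse to precisely $\frac{\sndparam + \fstparam - \sndparam\fstparam}{\sndparam}\inpr{q_{n+1} - y_{n+1}}{L\p{p_n - x_n}}$, the off-diagonal term of $-Q_2$. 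This reduces to a purely algebraic identity in the four scalars $\inpr{y_{n+1} - \bar y}{L\p{x_n - \bar x}}$, $\inpr{y_{n+1} - \bar y}{L\p{x_{n+1} - \bar x}}$, $\inpr{y_{n+2} - \bar y}{L\p{x_n - \bar x}}$ and $\inpr{y_{n+2} - \bar y}{L\p{x_{n+1} - \bar x}}$, which I would check by matching coefficients. It is exactly here that the coupling conditions \eqref{eq:a11andeq:c11} (fixing the resolvent arguments $w_n$ and $y_{n+1}$) are indispensable: with any other choice the cross terms would fail to reduce to the single off-diagonal coefficient of $Q_2$. Collecting everything then yields the stated inequality.
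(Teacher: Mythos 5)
Your proposal is correct and follows essentially the same route as the paper's proof: both rest on the two inclusions $\frac{1}{\tau}\p{x_n - p_n} - L^* y_{n+1} \in \MonotoneOpA p_n$ and $\frac{1}{\sigma}\p{y_{n+1} - q_{n+1}} + Lp_n + \frac{\fstparam}{\sndparam}\p{Lp_n - Lx_n} \in \MonotoneOpB^{-1} q_{n+1}$, monotonicity tested against $\p{\bar x, \bar y}$, and the algebraic reorganization of the resulting inner products into $Q_1$ and $Q_2$ via the update relations \eqref{eq:stability_y} and \eqref{eq:stability_x}. The only difference is the order of the bookkeeping (the paper expands the $Q_1$ difference first and then adds the two monotonicity inequalities, whereas you start from monotonicity and reconstruct $Q_1$), and your coefficient checks, including the origin of the factors $2 - \fstparam$, $2 - \sndparam$ and the collapse of the cross terms to $\frac{\sndparam + \fstparam - \sndparam\fstparam}{\sndparam}\inpr{q_{n+1} - y_{n+1}}{L\p{p_n - x_n}}$, agree with the paper's computation.
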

\begin{proof}
  Let $\p{\bar x, \bar y}$ satisfy \eqref{eq:inclusion_problem}. Then
  \begin{multline}\label{eq:quadratic_verbose}
    \norel Q_1\p{x_{n+1} - \bar x, y_{n+2} - \bar y} - Q_1\p{x_n - \bar x, y_{n+1} - \bar y} 
    \\ \shoveleft{\qquad
     = \frac{1}{2\tau \fstparam}\p{\norm{x_{n+1} - \bar x}^2 - \norm{x_n - \bar x}^2} 
    }\\ \shoveleft{\qquad\qquad
      + \frac{1}{2\sigma \sndparam}\p{\norm{y_{n+2} - \bar y}^2 - \norm{y_{n+1} - \bar y}^2}  
    }\\ \shoveleft{\qquad\qquad
      + \frac{1}{\sndparam} \p{\inpr{y_{n+1} - \bar y}{Lx_n - L\bar x} - \inpr{y_{n+2} - \bar y}{Lx_{n+1} - L\bar x}} 
    }\\ \shoveleft{\qquad
    = \frac{1}{2\tau \fstparam}\p{\norm{x_n - \bar x + \fstparam\p{p_n - x_n}}^2 - \norm{x_n - \bar x}^2}
    }\\ \shoveleft{\qquad\qquad
       + \frac{1}{2\sigma \sndparam} \p{\norm{y_{n+1} - \bar y + \sndparam\p{q_{n+1} - y_{n+1}}}^2 - \norm{y_{n+1} - \bar y}^2} 
    }\\ \shoveleft{\qquad\qquad
       + \frac{1}{\sndparam} \Bigl( \inpr{y_{n+1} - \bar y}{Lx_n - L\bar x} 
    }\\ \shoveleft{\qquad\qquad
       - \inpr{y_{n+1} - \bar y + \sndparam\p{q_{n+1} - y_{n+1}}}{Lx_n - L\bar x + \fstparam\p{Lp_n - Lx_n}} \Bigr) 
    }\\ \shoveleft{\qquad
     = \frac{\fstparam}{2\tau} \norm{p_n - x_n}^2 + \frac{1}{\tau} \inpr{x_n - \bar x}{p_n - x_n} + \frac{\sndparam}{2\sigma} \norm{q_{n+1} - y_{n+1}}^2 
    }\\ \shoveleft{\qquad\qquad
       + \frac{1}{\sigma} \inpr{y_{n+1} - \bar y}{q_{n+1} - y_{n+1}} + \frac{\fstparam}{\sndparam} \inpr{\bar y - y_{n+1}}{Lp_n - Lx_n} 
    }\\ \shoveleft{\qquad\qquad
       + \inpr{q_{n+1} - y_{n+1}}{L\bar x - Lx_n} + \fstparam \inpr{y_{n+1} -
         q_{n+1}}{Lp_n - Lx_n}.}\\[-15.5pt] 
  \end{multline}
  To estimate the above, we use the monotonicity of the operator $\MonotoneOpB^{-1}$
  together with the inclusions $L\bar x \in \MonotoneOpB^{-1} \bar y$ from \eqref{eq:inclusion_problem} and
  \begin{equation}
    \frac{y_{n+1} - q_{n+1}}{\sigma} + Lp_n + \frac{\fstparam}{\sndparam}\p{Lp_n - Lx_n} \in \MonotoneOpB^{-1} q_{n+1},\label{eq:iteration_inclusion_B}
  \end{equation}
  which is a reformulation of \eqref{eq:stability_q} with $n$ replaced by $n+1$.
  This yields the inequality
  \begin{align}
    0 &\leq \inpr{\frac{y_{n+1} - q_{n+1}}{\sigma} + Lp_n + \frac{\fstparam}{\sndparam}\p{Lp_n - Lx_n} - L\bar x}{q_{n+1} - \bar y} \nonumber \\
      &= \frac{1}{\sigma} \inpr{y_{n+1} - q_{n+1}}{q_{n+1} - \bar y} + \inpr{Lp_n - L\bar x}{q_{n+1} - \bar y} \nonumber \\
      &\qquad + \frac{\fstparam}{\sndparam}\inpr{Lp_n - Lx_n}{q_{n+1} - \bar y} \label{eq:basic_ineq_1}
  \end{align}
  Analogously, we can rewrite \eqref{eq:stability_p} as
  \begin{equation}\label{eq:iteration_inclusion_A}
    \frac{x_n - p_n}{\tau} - L^* y_{n+1} \in \MonotoneOpA p_n.
  \end{equation}
  The monotonicity of $\MonotoneOpA$ together with the inclusion $-L^* \bar y \in \MonotoneOpA \bar x$ from \eqref{eq:inclusion_problem} now yields
  \begin{align}\label{eq:basic_ineq_2}
    0 &\leq \inpr{\frac{x_n - p_n}{\tau} - L^* y_{n+1} + L^* \bar y}{p_n - \bar x} \nonumber \\
       &= \frac{1}{\tau} \inpr{x_n - p_n}{p_n - \bar x} + \inpr{\bar y - y_{n+1}}{Lp_n - L\bar x}. 
  \end{align}
  Adding \eqref{eq:basic_ineq_1} and \eqref{eq:basic_ineq_2} yields
  \begin{multline}\label{eq:monotonicity_inequality_combined}
    0 \leq \frac{1}{\sigma} \inpr{y_{n+1} - q_{n+1}}{q_{n+1} - \bar y} + \inpr{Lp_n - L\bar x}{q_{n+1} - y_{n+1}} \\
    + \frac{\fstparam}{\sndparam}\inpr{Lp_n - Lx_n}{q_{n+1} - \bar y} + \frac{1}{\tau} \inpr{x_n - p_n}{p_n - \bar x}, 
  \end{multline}
  which, combined with \eqref{eq:quadratic_verbose}, gives
  \begin{multline}\label{eq:stability_conclusion}
    \norel Q_1\p{x_{n+1} - \bar x, y_{n+2} - \bar y} - Q_1\p{x_n - \bar x, y_{n+1} - \bar y} 
    \\ \shoveleft{\qquad
    \leq \frac{\fstparam}{2\tau} \norm{p_n - x_n}^2 + \frac{1}{\tau} \inpr{x_n - \bar x}{p_n - x_n} + \frac{\sndparam}{2\sigma} \norm{q_{n+1} - y_{n+1}}^2 
    }\\ \shoveleft{\qquad\qquad
     + \frac{1}{\sigma} \inpr{y_{n+1} - \bar y}{q_{n+1} - y_{n+1}} + \frac{\fstparam}{\sndparam} \inpr{\bar y - y_{n+1}}{Lp_n - Lx_n} 
    }\\ \shoveleft{\qquad\qquad
     + \inpr{q_{n+1} - y_{n+1}}{L\bar x - Lx_n} + \fstparam \inpr{y_{n+1} - q_{n+1}}{Lp_n - Lx_n} 
    }\\ \shoveleft{\qquad\qquad
     + \frac{1}{\sigma} \inpr{y_{n+1} - q_{n+1}}{q_{n+1} - \bar y} + \inpr{Lp_n - L\bar x}{q_{n+1} - y_{n+1}} 
    }\\ \shoveleft{\qquad\qquad
     + \frac{\fstparam}{\sndparam}\inpr{Lp_n - Lx_n}{q_{n+1} - \bar y} + \frac{1}{\tau} \inpr{x_n - p_n}{p_n - \bar x} 
    }\\ \shoveleft{\qquad
    = \p{\frac{\fstparam}{2\tau} - \frac{1}{\tau}} \norm{p_n - x_n}^2 + \p{\frac{\sndparam}{2\sigma} - \frac{1}{\sigma}} \norm{q_{n+1} - y_{n+1}}^2 
    }\\ \shoveleft{\qquad\qquad
     + \p{\frac{\fstparam}{\sndparam} + 1 - \fstparam} \inpr{q_{n+1} - y_{n+1}}{Lp_n - Lx_n} 
    }\\ \shoveleft{\qquad
      = -Q_2\p{p_n - x_n, q_{n+1} - y_{n+1}}.} \\[-15.5pt]
  \end{multline}
  This concludes the proof.
\end{proof}

\begin{proposition}\label{lem:stability_strongly_monotone}
  Let $Q_1$ and $Q_2$ be defined as in Lemma~\ref{lem:quadratic_forms} and assume the conditions stated in Theorem~\ref{thm:convergence} hold.
  \begin{enumerate}
  \item If $\MonotoneOpA$ is $\mu_1$-strongly monotone for some $\mu_1 > 0$, then
  \begin{multline*}
    Q_1\p{x_{n+1} - \bar x, y_{n+2} - \bar y} - Q_1\p{x_n - \bar x, y_{n+1} -
      \bar y} + \mu_1 \norm{p_n - \bar x}^2 \\
    \leq -Q_2\p{p_n - x_n, q_{n+1} - y_{n+1}}.
  \end{multline*}
  \item If $\MonotoneOpB^{-1}$ is $\mu_2$-strongly monotone for some $\mu_2 > 0$, then
  \begin{multline*}
      Q_1\p{x_{n+1} - \bar x, y_{n+2} - \bar y} - Q_1\p{x_n - \bar x, y_{n+1} -
        \bar y} + \mu_2 \norm{q_{n+1} - \bar y}^2 \\
      \leq -Q_2\p{p_n - x_n, q_{n+1} - y_{n+1}}.
    \end{multline*}
  \end{enumerate}
\end{proposition}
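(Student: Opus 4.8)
The plan is to observe that this proposition is a direct strengthening of Proposition~\ref{lem:stability}, in which exactly one of the two monotonicity inequalities is upgraded to a strong-monotonicity inequality. The algebraic identity \eqref{eq:quadratic_verbose}, which expands the $Q_1$-difference in terms of the iterates, never uses monotonicity of $\MonotoneOpA$ or $\MonotoneOpB^{-1}$: it relies only on the update rules \eqref{eq:stability} and the fact that $\p{\bar x, \bar y}$ satisfies \eqref{eq:inclusion_problem}. I would therefore reuse \eqref{eq:quadratic_verbose} verbatim in both parts and modify only the step where monotonicity is invoked.

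For the first part, assume $\MonotoneOpA$ is $\mu_1$-strongly monotone. I would revisit the derivation of \eqref{eq:basic_ineq_2}, which rests on the two inclusions $\mathord{-}L^* \bar y \in \MonotoneOpA \bar x$ and \eqref{eq:iteration_inclusion_A}. Applying strong monotonicity in place of plain monotonicity to this pair replaces the lower bound $0$ by $\mu_1 \norm{p_n - \bar x}^2$, so that \eqref{eq:basic_ineq_2} becomes
\[
  \mu_1 \norm{p_n - \bar x}^2 \leq \frac{1}{\tau} \inpr{x_n - p_n}{p_n - \bar x} + \inpr{\bar y - y_{n+1}}{Lp_n - L\bar x}.
\]
Adding this to the unchanged inequality \eqref{eq:basic_ineq_1} and substituting into \eqref{eq:quadratic_verbose} exactly as in the proof of Proposition~\ref{lem:stability}, the right-hand side collapses to $-Q_2\p{p_n - x_n, q_{n+1} - y_{n+1}}$ just as before, but now the term $\mu_1 \norm{p_n - \bar x}^2$ is carried along on the left, which is precisely the claimed inequality.

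For the second part, assume instead that $\MonotoneOpB^{-1}$ is $\mu_2$-strongly monotone. Here I would strengthen \eqref{eq:basic_ineq_1} in the same fashion, using $L\bar x \in \MonotoneOpB^{-1} \bar y$ together with \eqref{eq:iteration_inclusion_B}; strong monotonicity of $\MonotoneOpB^{-1}$ upgrades the leading $0$ of \eqref{eq:basic_ineq_1} to $\mu_2 \norm{q_{n+1} - \bar y}^2$. Adding this strengthened inequality to the unchanged \eqref{eq:basic_ineq_2} and inserting into \eqref{eq:quadratic_verbose} reproduces the identical right-hand side $-Q_2\p{p_n - x_n, q_{n+1} - y_{n+1}}$ with the extra nonnegative term $\mu_2 \norm{q_{n+1} - \bar y}^2$ on the left. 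Since the computation is otherwise identical to that of Proposition~\ref{lem:stability}, there is no substantive obstacle here; the only points that require care are confirming that the strong-monotonicity inner product coincides with the one already appearing in \eqref{eq:basic_ineq_1} and \eqref{eq:basic_ineq_2} (using symmetry of $\inpr{\cdot}{\cdot}$), and verifying that the added $\mu$-term consistently lands on the left-hand side of the final estimate.
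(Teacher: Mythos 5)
Your proposal is correct and follows essentially the same route as the paper: the paper also keeps the algebraic expansion \eqref{eq:quadratic_verbose} untouched, sharpens \eqref{eq:basic_ineq_2} to $\mu_1 \norm{\bar x - p_n}^2 \leq \inpr{\frac{x_n - p_n}{\tau} - L^* y_{n+1} + L^* \bar y}{p_n - \bar x}$ when $\MonotoneOpA$ is strongly monotone, carries the extra term through \eqref{eq:monotonicity_inequality_combined} and \eqref{eq:stability_conclusion}, and treats the case of strongly monotone $\MonotoneOpB^{-1}$ analogously by improving \eqref{eq:basic_ineq_1}.
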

\begin{proof}
  If $\MonotoneOpA$ is $\mu_1$-strongly monotone, we obtain from
  \eqref{eq:iteration_inclusion_A} and $-L^* \bar y\in \MonotoneOpA\bar x$ \eqref{eq:inclusion_problem} the
  estimation
  \[
    \mu_1 \norm{\bar x - p_n}^2 \leq \inpr{\frac{x_n - p_n}{\tau} - L^* y_{n+1}
      + L^* \bar y}{p_n - \bar x},
  \]
  which is a sharpened version of \eqref{eq:basic_ineq_2}. By modifying
  \eqref{eq:monotonicity_inequality_combined} and
  \eqref{eq:stability_conclusion} accordingly, we get the assumption.
  The case of a strongly monotone $\MonotoneOpB^{-1}$ is analogously shown by improving
  \eqref{eq:basic_ineq_1}.
\end{proof}

Having stated and proved the necessary estimations, we are now ready to prove Theorem~\ref{thm:convergence}.
\begin{proof}[Proof of Theorem~\ref{thm:convergence}]
  Let $\p{\bar x, \bar y}$ satisfy \eqref{eq:inclusion_problem}. By
  Proposition~\ref{lem:stability}, we get the estimation
  \begin{align*}
    Q_1\p{x_{n+1} - \bar x, y_{n+2} - \bar y} - Q_1\p{x_n - \bar x, y_{n+1} - \bar y} \leq -Q_2\p{p_n - x_n, q_{n+1} - y_{n+1}}.
  \end{align*}
  Considering Lemma~\ref{lem:quadratic_forms}, we see that the real sequence
  \[
    \p{Q_1\p{x_n - \bar x, y_{n+1} - \bar y}}_{n\geq 1}
  \]
  is monotonically nonincreasing and therefore has a limit for each primal-dual
  solution $\p{\bar x, \bar y}$. Furthermore, for all $N \geq 1$,
  \begin{multline*}
    Q_1\p{x_N - \bar x, y_{N+1} - \bar y} - Q_1\p{x_0 - \bar x, y_1 - \bar y} \\
    \leq - \sum_{n=0}^{N-1} Q_2\p{p_n - x_n, q_{n+1} - y_{n+1}}.
  \end{multline*}
  By Lemma~\ref{lem:quadratic_forms}, we have $Q_1\p{x_N - \bar x, y_{N+1} -
    \bar y} \geq 0$ and
  \begin{align*}
    Q_1\p{x_0 - \bar x, y_1 - \bar y} &\geq \sum_{n=0}^{N-1} Q_2\p{p_n - x_n, q_{n+1} - y_{n+1}} \\
    &\geq \sum_{n=0}^{N-1} C_2\norm{p_n - x_n}^2
  \end{align*}
  as well as
  \[
    Q_1\p{x_0 - \bar x, y_1 - \bar y} \geq \sum_{n = 0}^{N-1} D_2 \norm{q_{n+1}
      - y_{n+1}}^2.
  \]
  Since this holds for arbitrary $N \geq 1$, this proves parts
  \eqref{item:thm:convergence:primal_summability} and
  \eqref{item:thm:convergence:dual_summability} of the theorem.

  On the other hand, we have
  \[
    Q_1\p{x_0 - \bar x, y_1 - \bar y} \geq Q_1\p{x_N - \bar x, y_{N+1} - \bar y}
    \geq C_1 \norm{x_N - \bar x}^2
  \]
  and
  \[
    Q_1\p{x_0 - \bar x, y_1 - \bar y} \geq Q_1\p{x_N - \bar x, y_{N+1} - \bar y}
    \geq D_1 \norm{y_{N+1} - \bar y}^2
  \]
  for all $N \geq 1$, so the sequences $\p{x_n}_n$ and $\p{y_n}$ are bounded in
  $\PrimS$ and $\DualS$, respectively. Let $\p{n_k}_k$ be a subsequence with
  $x_{n_k} \weakto x_\infty \in \PrimS$ and $y_{n_k + 1} \weakto y_\infty \in
  \DualS$. By \eqref{eq:iteration_inclusion_A} and
  \eqref{eq:iteration_inclusion_B}, we obtain
  \begin{align*}
    \frac{x_{n_k} - p_{n_k}}{\tau} - L^* y_{n_k + 1} &\in \MonotoneOpA p_{n_k}, \\
    \frac{y_{n_k + 1} - q_{n_k + 1}}{\sigma} + Lp_{n_k} + \frac{\fstparam}{\sndparam} \p{Lp_{n_k} - Lx_{n_k}} &\in \MonotoneOpB^{-1} q_{n_k + 1}.
  \end{align*}
  Now apply \cite[Proposition 2.4]{alotaibi2014solving} with
  \begin{align*}
    a_k &= p_{n_k}, \\
    a_k^* &= \frac{x_{n_k} - p_{n_k}}{\tau} - L^* y_{n_k + 1}, \\
    b_k &= \frac{y_{n_k + 1} - q_{n_k + 1}}{\sigma} + Lp_{n_k} + \frac{\fstparam}{\sndparam}\p{Lp_{n_k} - Lx_{n_k}}, \\
    b_k^* &= q_{n_k + 1}
  \end{align*}
  and observe that
  \begin{align*}
    a_k &= x_{n_k} + \p{p_{n_k} - x_{n_k}} \weakto x_\infty, \\
    b_k^* &= y_{n_k + 1} + \p{q_{n_k + 1} - y_{n_k + 1}} \weakto y_\infty, \\
    a_k^* + L^* b_k^* &= \frac{x_{n_k} - p_{n_k}}{\tau} + L^* \p{q_{n_k + 1} - y_{n_k + 1}} \to 0, \\
    La_k - b_k &= - \frac{y_{n_k + 1} - q_{n_k + 1}}{\sigma} - \frac{\fstparam}{\sndparam}\p{Lp_{n_k} - Lx_{n_k}} \to 0
  \end{align*}
  because parts \eqref{item:thm:convergence:primal_summability} and
  \eqref{item:thm:convergence:dual_summability} imply that $x_{n_k} - p_{n_k} \to
  0$ and $y_{n_k + 1} - q_{n_k + 1} \to 0$ as $k\to +\infty$. This gives
  $Lx_\infty \in \MonotoneOpB^{-1} y_\infty$ and $-L^* y_\infty \in \MonotoneOpA x_\infty$, i.e., $\p{x_\infty, y_\infty}$ satisfies \eqref{eq:inclusion_problem}.
  Since the choice of the weakly convergent
  subsequence was arbitrary, each weak sequential cluster point 
  satisfies \eqref{eq:inclusion_problem}.
  Claim~\eqref{item:thm:convergence:weak_convergence} now follows from
  \cite[Lemma~2.47]{bauschke2017convex} applied to the norm
  $\sqrt{Q_1\p{\cdot}}$ on the product space $\PrimS \times \DualS$ and to the
  solution set of \eqref{eq:inclusion_problem}.

  Now assume that $\MonotoneOpA$ is $\mu_1$-strongly monotone for some $\mu_1 > 0$. By
  Proposition~\ref{lem:stability_strongly_monotone}, we get the estimation
  \begin{multline*}
    Q_1\p{x_{n+1} - \bar x, y_{n+2} - \bar y} - Q_1\p{x_n - \bar x, y_{n+1} -
      \bar y} + \mu_1 \norm{p_n - \bar x}^2 \\
    \leq -Q_2\p{p_n - x_n, q_{n+1} - y_{n+1}}
  \end{multline*}
  for all $n\geq 0$. Choose $N \geq 1$ and sum up this inequality for $n = 0,
  \ldots, N-1$ to obtain
  \begin{multline*}
    Q_1\p{x_N - \bar x, y_{N+1} - \bar y} - Q_1\p{x_0 - \bar x, y_1 -
      \bar y} + \mu_1 \sum_{n=0}^{N-1} \norm{p_n - \bar x}^2 \\
    \leq - \sum_{n=0}^{N-1} Q_2\p{p_n - x_n, q_{n+1} - y_{n+1}}
  \end{multline*}
  Since the terms $Q_1\p{x_N - \bar x, y_{N+1} - \bar y}$ and $\sum_{n=0}^{N-1}
  Q_2\p{p_n - x_n, q_{n+1} - y_{n+1}}$ are nonnegative by
  Lemma~\ref{lem:quadratic_forms}, we obtain
  \[
     \mu_1 \sum_{n=0}^{N-1} \norm{p_n - \bar x}^2 \leq Q_1\p{x_0 - \bar x, y_1 -
       \bar y}.
  \]
  Analogously, one gets
  \[
    \mu_2 \sum_{n=0}^{N-1} \norm{q_{n+1} - \bar y}^2 \leq Q_1\p{x_0 - \bar x,
      y_1 - \bar y},
  \]
  if $B^{-1}$ is $\mu_2$-strongly monotone, and since $N$ is arbitrary, both sums
  \[
    \sum_{n=0}^\infty \norm{p_n - \bar x}^2 \qquad \text{and} \sum_{n=0}^\infty
    \norm{q_{n+1} - \bar y}^2
  \]
  are finite in the respective cases. The uniqueness of the point $\bar x$ under the assumption of
  strong monotonicity of $\MonotoneOpA$ holds by the fact that we have shown $p_n \to \bar
  x$ for \emph{any} solution $\p{\bar x, \bar y}$ of
  \eqref{eq:inclusion_problem}. An analogous argument for $\bar y$ concludes the
  proof of Claim~\eqref{item:thm:convergence:strong_convergence}.
\end{proof}

\begin{remark}
  We were not able to show the weak convergence of PDHG
  \eqref{eq:alg_CP} for $\theta \neq 1$ with this proof method.
  Indeed, by a straightforward calculation it can be shown that from Fej\'er
  monotonicity with respect to any quadratic form of the sequence $\p{x_n,
    y_{n+1}}_n$ the conditions
  \eqref{eq:a11andeq:c11} can be derived, which implies $\theta = 1$.
\end{remark}

\subsection{Application to convex optimization}\label{subsec:convex_opt_solver}
In this section, we specialize the scheme \eqref{eq:stability} to the case where
the monotone operators $A$ and $B$ are subdifferentials $\partial F$ and
$\partial G$ of proper, convex and lower
semicontinuous functions $F: \PrimS \to \RealExt$ and $G: \DualS \to \RealExt$,
resectively. Algorithm \ref{alg:mon_op} then reads as follows:
\begin{algorithm}
  Choose parameters $\sigma > 0$, $\tau > 0$ and $\sndparam \in \Real$, $\fstparam \in
  \Real$ and starting points $x_0 \in \PrimS$, $x_1 \in \PrimS$, $p_0 \in
  \PrimS$, $y_1 \in \DualS$. For all $n = 1, 2, \ldots$, calculate
  \begin{subequations}\label{eq:opt_alg}
    \begin{align}
      q_n &= \Prox^{\sigma}_{G^*} \p{y_n + \sigma L \p{p_{n-1} + \frac{\fstparam}{\sndparam} \p{p_{n-1} - x_{n-1}}}}, \label{eq:opt_alg_q}\\
      y_{n+1} &= y_n + \sndparam \p{q_n - y_n}, \label{eq:opt_alg_y} \\
      p_n &= \Prox^{\tau}_{F} \p{x_n - \tau L^* y_{n+1}}, \label{eq:opt_alg_p} \\
      x_{n+1} &= x_n + \fstparam \p{p_n -  x_n}. \label{eq:opt_alg_x}
    \end{align}
  \end{subequations}
  Then, $x_n \weakto \bar x$, $p_n \weakto \bar x$, $y_n \weakto \bar y$, and $q_n
  \weakto \bar y$, where $\p{\bar x, \bar y}$ is a solution of
  \eqref{eq:primal_dual_solutions}, provided that $0 < \sndparam < 2$, $0 < \fstparam < 2$,
  and \eqref{eq:sigma_tau_bound} are satisfied.
\end{algorithm}

In this case, it is possible to get estimations for the Lagrangian, which is
defined in \eqref{eq:saddle_prob_cp}.

\begin{theorem}\label{thm:Lagrangian_rate}
  Given the assumptions in Theorem~\ref{thm:convergence},
  let $F: \PrimS \to
  \RealExt$ and $G: \DualS \to \RealExt$ be two proper, convex and lower
  semicontinuous functions. Let $x \in
  \PrimS$ and $y \in \DualS$ be arbitrary. Then the
  sequence $\p{q_n, y_n, p_n x_n}$ generated by \eqref{eq:opt_alg} satisfies
  \begin{align*}
    \min_{n=0, \ldots, N-1} \p{\Lagr{p_n}{y} - \Lagr{x}{q_{n+1}}} &\leq \frac{1}{N} Q_1\p{x_0 - x, y_1 - y}, \\
    \Lagr{\frac{1}{N} \sum_{n=0}^{N-1} p_n}{y} - \Lagr{x}{\frac{1}{N} \sum_{n=0}^{N-1} q_{n+1}} &\leq \frac{1}{N} Q_1\p{x_0 - x, y_1 - y}.
  \end{align*}
\end{theorem}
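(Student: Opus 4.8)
The plan is to mirror the proof of Proposition~\ref{lem:stability} almost line for line, but to replace the two monotonicity inequalities \eqref{eq:basic_ineq_1} and \eqref{eq:basic_ineq_2} by the sharper subgradient inequalities available in the convex setting. Since here $\MonotoneOpA = \partial F$ and $\MonotoneOpB^{-1} = \p{\partial G}^{-1} = \partial G^*$, the inclusions \eqref{eq:iteration_inclusion_A} and \eqref{eq:iteration_inclusion_B} say precisely that $\tfrac{x_n - p_n}{\tau} - L^* y_{n+1}$ is a subgradient of $F$ at $p_n$ and that $g_n \defeq \tfrac{y_{n+1} - q_{n+1}}{\sigma} + Lp_n + \tfrac{\fstparam}{\sndparam}\p{Lp_n - Lx_n}$ is a subgradient of $G^*$ at $q_{n+1}$. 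Testing these against the arbitrary points $x$ and $y$ yields $F\p{p_n} - F\p{x} \leq \inpr{\tfrac{x_n - p_n}{\tau} - L^* y_{n+1}}{p_n - x}$ and $G^*\p{q_{n+1}} - G^*\p{y} \leq \inpr{g_n}{q_{n+1} - y}$. These retain on their left-hand sides exactly the function-value gaps that will combine into the Lagrangian gap, whereas the monotonicity estimates used before only preserved the inner products.

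First I would write the Lagrangian gap $\Delta_n \defeq \Lagr{p_n}{y} - \Lagr{x}{q_{n+1}}$ out via the definition in \eqref{eq:saddle_prob_cp} and substitute the two subgradient inequalities for its terms $F\p{p_n} - F\p{x}$ and $G^*\p{q_{n+1}} - G^*\p{y}$. The one piece of genuine bookkeeping is to check that the bilinear terms in $L$ produced by the Lagrangian and by the two subgradients combine so as to reproduce exactly the right-hand side of \eqref{eq:monotonicity_inequality_combined}, but with the solution $\p{\bar x, \bar y}$ replaced by the arbitrary pair $\p{x, y}$; denote this quantity $R_n$. In short, the bound $0 \leq R_n$ extracted from monotonicity in Proposition~\ref{lem:stability} is upgraded in the convex case to $\Delta_n \leq R_n$.

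The second ingredient is purely algebraic and requires no new computation. The expansion \eqref{eq:quadratic_verbose} together with the simplification carried out in \eqref{eq:stability_conclusion} establishes the identity $\pb{Q_1\p{x_{n+1} - x, y_{n+2} - y} - Q_1\p{x_n - x, y_{n+1} - y}} + R_n = -Q_2\p{p_n - x_n, q_{n+1} - y_{n+1}}$, and this identity is valid for arbitrary $x, y$ because $\bar x, \bar y$ enter both the expansion and $R_n$ only through linear terms that cancel in the sum. Combining it with $\Delta_n \leq R_n$ gives the per-step estimate
\[
\Delta_n \leq Q_1\p{x_n - x, y_{n+1} - y} - Q_1\p{x_{n+1} - x, y_{n+2} - y} - Q_2\p{p_n - x_n, q_{n+1} - y_{n+1}},
\]
and discarding the nonnegative term $Q_2 \geq 0$ from Lemma~\ref{lem:quadratic_forms} leaves a telescoping bound for $\Delta_n$.

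Finally I would sum the resulting estimate over $n = 0, \ldots, N-1$: the $Q_1$-terms telescope and the trailing term $Q_1\p{x_N - x, y_{N+1} - y} \geq 0$ is dropped, giving $\sum_{n=0}^{N-1} \Delta_n \leq Q_1\p{x_0 - x, y_1 - y}$. The first claimed inequality is then immediate, since the minimum of the $\Delta_n$ is at most their average. For the second, I would apply Jensen's inequality, using that $\Lagr{\cdot}{y}$ is convex (as $F$ is convex and $\inpr{L\cdot}{y}$ is linear) while $\Lagr{x}{\cdot}$ is concave (as $-G^*$ is concave and $\inpr{Lx}{\cdot}$ is linear), so that passing to the arithmetic means $\tfrac1N\sum p_n$ and $\tfrac1N\sum q_{n+1}$ only decreases the gap. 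I expect the only real obstacle to be the careful collection of the $L$-bilinear terms in the second paragraph; once the identification with $R_n$ is confirmed, the remainder is a direct transcription of the Fej\'er-monotonicity argument with the function-value gaps carried along.
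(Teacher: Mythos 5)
Your proposal is correct and is essentially the paper's own proof: the paper's Proposition~\ref{lem:stability_lagrangian} performs exactly the upgrade you describe — replacing the monotonicity inequalities by the subgradient inequalities from \eqref{eq:iteration_inclusion_A} and \eqref{eq:iteration_inclusion_B} tested at arbitrary $\p{x,y}$, and reusing the algebraic computation of \eqref{eq:quadratic_verbose}--\eqref{eq:stability_conclusion} with the solution replaced by the arbitrary pair — and the theorem then follows by the same telescoping sum, the bound of the minimum by the average, and convexity-concavity of the Lagrangian.
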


This theorem is proved using the following proposition, which bounds the Lagrangian in
terms of the quadratic forms defined in Lemma~\ref{lem:quadratic_forms}.
\begin{proposition}\label{lem:stability_lagrangian}
  Given the assumptions in Theorem~\ref{thm:convergence}, let $F\colon \PrimS \to \RealExt$ and $G\colon \DualS \to \RealExt$ be two
  proper, convex and lower semicontinuous functions. Let $x \in \PrimS$ and
  $y\in \DualS$ be arbitrary. Then the sequence $\p{q_n, y_n, p_n, x_n}$
  generated by \eqref{eq:opt_alg} satisfies
  \begin{multline*}
    \Lagr{p_n}{y} - \Lagr{x}{q_{n+1}} \leq Q_1\p{x_n - x, y_{n+1} - y} -
    Q_1\p{x_{n+1} - x, y_{n+2} - y} \\ - Q_2\p{p_n - x_n, q_{n+1} - y_{n+1}}
  \end{multline*}
  for all $n \geq 1$, $x\in \PrimS$ and $y\in \DualS$.
\end{proposition}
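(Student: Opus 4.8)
The plan is to exploit that, in the convex-optimization specialization where $\MonotoneOpA = \partial F$ and $\MonotoneOpB = \partial G$ (so $\MonotoneOpB^{-1} = \partial G^*$), the bare monotonicity inequalities used in the proof of Proposition~\ref{lem:stability} can be upgraded to the sharper subgradient inequalities, and the extra terms these produce assemble exactly into the Lagrangian difference. First I would expand $\Lagr{p_n}{y} - \Lagr{x}{q_{n+1}}$ using the definition of the Lagrangian in \eqref{eq:saddle_prob_cp}, obtaining $\inpr{Lp_n}{y} - \inpr{Lx}{q_{n+1}} + \p{F\p{p_n} - F\p{x}} + \p{G^*\p{q_{n+1}} - G^*\p{y}}$.

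Next I would rewrite the inclusions \eqref{eq:iteration_inclusion_A} and \eqref{eq:iteration_inclusion_B} as subgradient inequalities. From $\frac{x_n - p_n}{\tau} - L^* y_{n+1} \in \partial F\p{p_n}$, evaluating the definition of the subdifferential at the arbitrary point $x$ yields $F\p{p_n} - F\p{x} \leq \inpr{\frac{x_n - p_n}{\tau} - L^* y_{n+1}}{p_n - x}$; analogously, the inclusion for $\partial G^*$ gives $G^*\p{q_{n+1}} - G^*\p{y} \leq \inpr{\frac{y_{n+1} - q_{n+1}}{\sigma} + Lp_n + \frac{\fstparam}{\sndparam}\p{Lp_n - Lx_n}}{q_{n+1} - y}$. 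These are precisely the one-sided sharpenings of \eqref{eq:basic_ineq_2} and \eqref{eq:basic_ineq_1}, but with the arbitrary reference point $\p{x, y}$ in place of a solution $\p{\bar x, \bar y}$; crucially, convexity removes any requirement that $\p{x, y}$ satisfy an inclusion.

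The third step is bookkeeping: substituting these two bounds into the expanded Lagrangian and collecting the bilinear $L$-terms, the cross terms $\inpr{Lp_n}{y}$ and $-\inpr{Lx}{q_{n+1}}$ combine with the $-\inpr{y_{n+1}}{Lp_n - Lx}$ and $\inpr{Lp_n}{q_{n+1} - y}$ contributions to collapse into $\inpr{Lp_n - Lx}{q_{n+1} - y_{n+1}}$. The upshot is that $\Lagr{p_n}{y} - \Lagr{x}{q_{n+1}}$ is bounded above by exactly the right-hand side of the combined monotonicity inequality \eqref{eq:monotonicity_inequality_combined}, now read with $\bar x, \bar y$ replaced by $x, y$.

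Finally, I would invoke the algebraic identities already established inside the proof of Proposition~\ref{lem:stability}. The expansion \eqref{eq:quadratic_verbose} is a pure identity in the reference point, so it holds verbatim with $\p{x, y}$ in place of $\p{\bar x, \bar y}$, expressing $Q_1\p{x_{n+1} - x, y_{n+2} - y} - Q_1\p{x_n - x, y_{n+1} - y}$ as the verbose quadratic form; likewise the final equality chain in \eqref{eq:stability_conclusion} shows that this verbose form plus the monotonicity right-hand side equals $-Q_2\p{p_n - x_n, q_{n+1} - y_{n+1}}$, and one checks that every $x$- and $y$-dependent inner product cancels, so the identity is genuinely base-point free. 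Rearranging, the monotonicity right-hand side equals $Q_1\p{x_n - x, y_{n+1} - y} - Q_1\p{x_{n+1} - x, y_{n+2} - y} - Q_2\p{p_n - x_n, q_{n+1} - y_{n+1}}$, and combining with the bound from the third step gives the claim. I expect the main obstacle to be that third step, namely verifying that the assorted inner products genuinely reorganize into the monotonicity right-hand side rather than leaving residual $L$-terms; this is, however, the same cancellation already implicit in \eqref{eq:stability_conclusion}, so it should go through cleanly.
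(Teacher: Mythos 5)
Your proposal is correct and follows essentially the same route as the paper's own proof: upgrade the inclusions \eqref{eq:iteration_inclusion_A} and \eqref{eq:iteration_inclusion_B} to subgradient inequalities for $F$ and $G^*$ at the arbitrary points $x$ and $y$, bound $\Lagr{p_n}{y} - \Lagr{x}{q_{n+1}}$ by the right-hand side of \eqref{eq:monotonicity_inequality_combined} with $\p{\bar x, \bar y}$ replaced by $\p{x, y}$, and then reuse the algebra of \eqref{eq:quadratic_verbose} and \eqref{eq:stability_conclusion}, which is indeed base-point free. Your explicit verification of that last cancellation is exactly what the paper leaves to the reader as ``an analogous calculation.''
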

\begin{proof}
  Since $\MonotoneOpB^{-1} = \partial G^*$ and $\MonotoneOpA = \partial F$, the inclusions
  \eqref{eq:iteration_inclusion_B} and \eqref{eq:iteration_inclusion_A} provide
  certain subgradients, which imply the inequalities
  \begin{align*}
    G^*\p{y} &\geq G^*\p{q_{n+1}} + \frac{1}{\sigma} \inpr{y_{n+1} - q_{n+1}}{y - q_{n+1}} + \inpr{Lp_n}{y - q_{n+1}} \\
    &\qquad + \frac{\fstparam}{\sndparam} \inpr{Lp_n - Lx_n}{y - q_{n+1}}, \\
    F\p{x} &\geq F\p{p_n} + \frac{1}{\tau} \inpr{x_n - p_n}{x - p_n} - \inpr{L^* y_{n+1}}{x - p_n}.
  \end{align*}
  Therefore, we have
  \begin{multline*}
    \Lagr{p_n}{y} - \Lagr{x}{q_{n+1}}
    \\ \shoveleft{\qquad
      = \inpr{Lp_n}{y} + F\p{p_n} - G^*\p{y} - \inpr{Lx}{q_{n+1}} - F\p{x} + G^*\p{q_{n+1}} 
    }\\ \shoveleft{\qquad
    \leq \frac{1}{\tau} \inpr{x_n - p_n}{p_n - x} + \inpr{Lp_n - Lx}{q_{n+1} - y_{n+1}} 
    }\\ \shoveleft{\qquad\qquad
      + \frac{1}{\sigma} \inpr{y_{n+1} - q_{n+1}}{q_{n+1} - y} +
      \frac{\fstparam}{\sndparam}\inpr{Lp_n - Lx_n}{q_{n+1} - y}.} \\[-15.5pt]
  \end{multline*}
  
  The right-hand side is now (except for the replacement of $\bar x$ and $\bar
  y$ by $x$ and $y$, respectively) equal to the one in
  \eqref{eq:monotonicity_inequality_combined}, and one easily checks by an
  analogous calculation, that it equals the expression in the assertion.
\end{proof}

\begin{proof}[Proof of Theorem \ref{thm:Lagrangian_rate}]
  By summing the inequality in Proposition~\ref{lem:stability_lagrangian} for $n = 0,
  \ldots, N-1$ and dividing by $N$ for some $N \geq 1$, we get
  \[
    \frac{1}{N} \sum_{n=0}^{N-1} \p{\Lagr{p_n}{y} - \Lagr{x}{q_{n+1}}} \leq
    \frac{1}{N} Q_1\p{x_0 - x, y_1 - y}
  \]
  for all $x \in \PrimS$ and $y \in \DualS$, where we dropped nonpositive terms
  on the right-hand side.

  We have two possibilities to further estimate the left-hand side: First, we
  notice that it is the arithmetic mean of numbers, which is always greater than
  the minimum, i.e.,
  \[
    \frac{1}{N} \sum_{n=0}^{N-1} \p{\Lagr{p_n}{y} - \Lagr{x}{q_{n+1}}} \geq
    \min_{n=0, \ldots, N-1} \p{\Lagr{p_n}{y} - \Lagr{x}{q_{n+1}}}.
  \]
  On the other hand, the Lagrangian is convex in its first and concave in its
  second component, so
  \[
    \frac{1}{N} \sum_{n=0}^{N-1} \p{\Lagr{p_n}{y} - \Lagr{x}{q_{n+1}}} \geq
    \Lagr{\frac{1}{N} \sum_{n=0}^{N-1} p_n}{y} - \Lagr{x}{\frac{1}{N}
      \sum_{n=0}^{N-1} q_{n+1}}. \qedhere
  \]
\end{proof}

\section{Learning an optimization solver}\label{sec:learning_opt_solver}
Most optimization problems are solved using iterative methods, akin to the ones
presented in Sections~\ref{sec:background} and \ref{sec:new_solver}.
However, the number of iterations it takes in order for the algorithm to converge is
in general hard to predict, which creates problems in time-critical
applications.
In these situations one could instead consider only doing a
predefined fixed number $n$ of iterations.
A natural question that arises in response to this is:
\emph{what parameter values in the optimization solver give the best improvement of
the objective function in $n$ iterations}?
This question leads to a meta-optimization over optimization solvers. 
Moreover, in general we are not only interested in optimizing one single cost
function, but rather a (potentially infinite) family $\{ F_{\theta} \}_{\theta
  \in \Theta}$
of cost functions, each with a minimizer $\bar x_\theta$. 
Hence, to make the question precise one needs to specify which family of
optimization solvers one is considering, which is the family of cost functions
of interested, and what is meant with \enquote{best improvement}.

One such question was raised in
\cite{drori2014performance}, where the authors consider the worst-case
performance $\sup_{\theta \in \Theta} \bigl[ F_\theta(x_n) - F_\theta(\bar{x}_\theta) \bigr]$ of gradient-based
algorithms over the set of continuously differentiable functions with
Lipschitz-continuous gradients, and with a uniform upper bound on the
Lipschitz constants.
Subsequent work
along the same lines can be found in \cite{kim2016optimized, taylor2017smooth}.

The idea of optimizing over optimization solvers has also been considered from
a machine learning perspective.
This has for example been done using \emph{reinforcement learning}
\cite{li2016learning}, and using \emph{unsupervised learning}
\cite{gregor2010learning, andrychowicz2016learning}.
In the latter category, one looks for algorithm parameters which minimize the expected value of the
difference in objective function value,
\begin{equation}\label{eq:training_cost_function}
  \Expect_{\theta}\big[ F_\theta(x_n) - F_\theta(\bar x_\theta) \big] =
  \Expect_{\theta} \big[ F_\theta(x_n)\big] - \Expect_{\theta} \big[ F_\theta(\bar x_\theta) \big]
\end{equation}
where $\Theta$ is endowed with a probability measure and $x_n$ is the output of the
algorithm after $n$ iterations. However,
optimizing \eqref{eq:training_cost_function} with respect to the parameters of the method is
independent of the optimal points $\{ \bar{x}_{\theta} \}_{\theta \in \Theta}$, thus, this translates into unsupervised learning,
i.e., the cost function $\Expect_{\theta} \big[ F_\theta(x_n)\big]$ does not
depend on $\bar x_\theta$.
In this setting, \cite{andrychowicz2016learning} restricts attention to 
an architecture that operates individually on each
coordinate of $x$.
This is done in order to limit the number of parameters in the algorithm, which
otherwise would grow exponentially with the dimension of $x$.
To overcome this, we use an approach similar to \cite{gregor2010learning},
where the network architecture is inspired by modern first-order optimization
solvers for nonsmooth problems, as presented in Sections~\ref{sec:background}
and \ref{sec:new_solver}.
Similar ideas have also recently been explored for supervised learning in inverse
problems in \cite{YaSuLiXu16, adler2017solving, adler2017learned, mardani2017deep, putzky2017recurrent, adler2017learning, hammernik2018learning}.

\subsection{Unrolled gradient descent as a neural
  network}\label{sec:gradient_descent}
Before we define the architecture considered in this work, we first present
an illustrative example.
To this end, consider the optimization problem
\[
\min_x \; \; F(x).
\]
We assume that $F$ is smooth, which means that the problem can be solved using
a standard gradient descent algorithm, i.e., by performing the updates
\[
x_{k} = x_{k - 1} - \StepLength_k \nabla F(x_k).
\]
The gradient descent algorithm contains a set of parameters that need to be
selected, namely the step length for each iteration, $\StepLength_k$.
This is normally done via the Goldstein rule or backtracking line search
(Armijo rule) \cite{bertsekas1999nonlinear}, which under suitable conditions
ensures convergence to the optimal point $\bar x$.

However, if we only run the algorithm for a fixed number $n$ of steps,
the gradient descent algorithm can be seen as a \emph{feedforward neural network},
as shown in Figure~\ref{fig:gd_graph}.
Each layer in the network performs the computation
$x_{k - 1} - \StepLength_k \nabla F(x_{k-1})$ and the parameters of the network
are $[\StepLength_1, \ldots, \StepLength_n]$.
Moreover, if the step length is fixed to be the same in all iterations, i.e., $\StepLength_1 = \ldots = \StepLength_n = \StepLength$ for some $\StepLength$,
the gradient descent algorithm can in fact be interpreted as a \emph{recurrent neural network}.
In both cases,
for a given family $\{ F_\theta \}_{\theta \in \Theta}$ of cost functions
the network parameter(s) can be trained (optimized) by minimizing
$\Expect_{\theta}\big[ F_\theta(x_n)]$, 
where $x_n$ is the output of the network in Figure~\ref{fig:gd_graph}.
For simple cases this can be done analytically.

\begin{figure}
\centering
\tikzset{%
  block/.style    = {draw, thick, rectangle, minimum height = 2em,
    minimum width = 2em},
  circ/.style      = {draw, circle, node distance = 1.5cm},
}

\begin{tikzpicture}[auto, node distance=1.37cm, >=triangle 45]

\draw
node [circ] (x0) {\large $x_0$}
node [block, below right of=x0] (grad1) {\large $\nabla F(\cdot)$}
node [block, right of=grad1] (mult1) {\large $\times$}
node [block, below of=mult1] (alpha1) {\large $\sigma_1$}
node [block, above right of=mult1] (sub1) {\large $-$}
node [above of=sub1] (x1) {\large $x_1$};

\draw[->](x0) -- (grad1);
\draw[->](x0) -- (sub1);
\draw[->](grad1) -- (mult1);
\draw[->](alpha1) -- (mult1);
\draw[->](mult1) -- (sub1);
\draw[dashed,->](sub1) -- (x1);

\draw
node [block, below right of=sub1] (grad2) {\large $\nabla F(\cdot)$}
node [right of=sub1] (upperdotsspace) {}
node [right of=upperdotsspace] (upperdots) {\large $\ldots$}
node [right of=grad2] (lowerdots) {\large $\ldots$}
node [block, right of=upperdots] (subnminone) {\large $-$}
node [above of=subnminone] (xnminone) {\large $x_{n-1}$};

\draw[->](sub1) -- (upperdots);
\draw[->](sub1) -- (grad2);
\draw[->](grad2) -- (lowerdots);
\draw[->](grad2) -- (lowerdots);
\draw[->](upperdots) -- (subnminone);
\draw[->](lowerdots) -- (subnminone);
\draw[dashed,->](subnminone) -- (xnminone);

\draw
node [block, below right of=subnminone] (gradn) {\large $\nabla F(\cdot)$}
node [block, right of=gradn] (multn) {\large $\times$}
node [block, below of=multn] (alphan) {\large $\sigma_n$}
node [block, above right of=multn] (subn) {\large $-$}
node [above of=subn, name=xn] {\large $x_n$};

\draw[->](subnminone) -- (gradn);
\draw[->](subnminone) -- (subn);
\draw[->](gradn) -- (multn);
\draw[->](alphan) -- (multn);
\draw[->](multn) -- (subn);
\draw[->](subn) -- (xn);
\end{tikzpicture}
\caption{Gradient descent.}
\label{fig:gd_graph}
\end{figure}
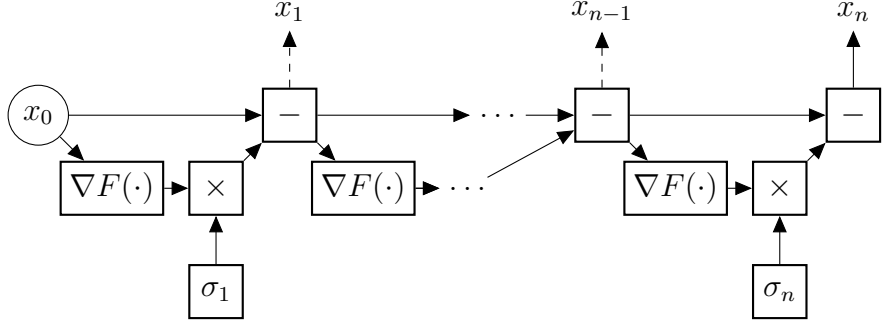

\begin{example}\label{ex:gradient_descent}
  Consider the family $(F_b)_b$ of functions $F_b: \Real^n \to \Real$ given by
  $F_b(x) = \frac{1}{2} x^\top \ExMatrix x - b^\top x$, where $\ExMatrix \in \Real^{n\times n}$ is
  a (fixed) symmetric and positive definite matrix. The minimum of $F_b$ is given
  by $\bar x_b = \ExMatrix^{-1} b$. Denote by $\Lambda_\sigma$ the result of taking a
  gradient step of length $\sigma > 0$, i.e.,
  \[
    \Lambda_\sigma(x) = x - \sigma \nabla F_b(x) = x - \sigma (\ExMatrix x - b),
    \qquad x\in \Real^n.
  \]
  Let $x_0 \in \Real^n$ be an arbitrary starting point of the iteration.
  This gives
  \begin{align*}
    F_b(\Lambda_\sigma(x_0))
    &= F_b(x_0 - \sigma (\ExMatrix x_0 - b)) \\
    &= \frac{1}{2} (x_0 - \sigma (\ExMatrix x_0 - b))^\top \ExMatrix (x_0 - \sigma (\ExMatrix x_0 - b))
      - b^\top (x_0 - \sigma (\ExMatrix x_0 - b)) \\
    &= \frac{\sigma^2}{2} (\ExMatrix x_0 - b)^\top \ExMatrix (\ExMatrix x_0 - b)
      - \sigma \norm{\ExMatrix x_0 - b}^2 + F_b(x_0).
  \end{align*}
  Let $\stb$ be a random variable distributed according to $\stb \sim \Probdist$ for
  some probability distribution $\Probdist$ with finite first and second
  moments.
  Finding a $\sigma$ that minimizes the expectation
  \begin{multline*}
    \Expect_{\stb\sim \Probdist} \Bigl[ F_{\stb}(\Lambda_\sigma(x_0)) \Bigr]
    = \frac{\sigma^2}{2} \Expect_{\stb \sim \Probdist}
    \Bigl[ (\ExMatrix x_0 - \stb)^\top \ExMatrix (\ExMatrix x_0 - \stb) \Bigr] \\
    - \sigma \Expect_{\stb\sim \Probdist} \Bigl[ \norm{\ExMatrix x_0 - \stb}^2
    \Bigr] + \Expect_{\stb\sim \Probdist} \Bigl[ F_\stb (x_0) \Bigr],
  \end{multline*}
  is a quadratic problem in one variable, and the optimal value of $\sigma$ is thus
  \begin{align*}
    \sigma &= \frac{\Expect_{\stb\sim \Probdist} \Bigl[ \norm{\ExMatrix x_0 - \stb}^2
      \Bigr]}{\Expect_{\stb \sim \Probdist} \Bigl[ (\ExMatrix x_0 - \stb)^\top \ExMatrix (\ExMatrix x_0 - \stb)
      \Bigr]} \\
    &= \frac{\norm{\ExMatrix x_0}^2 - 2 (\ExMatrix x_0)^\top \Expect_{\stb\sim \Probdist}
      \bigl[ \stb \bigr] + \Expect_{\stb\sim \Probdist} \Bigl[ \| \stb \|^2 \Bigr]}
      {x_0^\top \ExMatrix^3 x_0 - 2(\ExMatrix^2 x_0)^\top \Expect_{\stb\sim \Probdist} \bigl[
      \stb \bigr] + \Expect_{\stb\sim \Probdist} \Bigl[ \stb^\top \ExMatrix \stb \Bigr]}.
  \end{align*}
  In some particular cases this expression can be simplified.
  For example if $\ExMatrix = I$, then $\sigma = 1$ as expected.
  Or if $x_0 = 0$, then $\sigma = \Expect_{\stb\sim \Probdist} [ \| \stb \|^2 ] / \Expect_{\stb\sim \Probdist} [ \stb^\top \ExMatrix \stb ]$.
\end{example}

\subsection{Parametrizing a family of optimization algorithms}\label{subsec:param_learned_opt_solver}
Similarly to the considerations in Section \ref{sec:gradient_descent}, for a fixed
number of iterations one can
consider the optimization algorithms \eqref{eq:alg_CP}, \eqref{eq:alg_DR} and \eqref{eq:opt_alg} as neural networks,
where the variables we want to train are the parameters of the optimization methods.
Optimizing these parameters with respect to the constraints corresponding to each
algorithm is effectively trying to find optimal parameters for the corresponding
algorithm for a given family of cost functions.
However, if one only intends to do a finite number of iterations one could also remove this constraint,
and thereby enlarge the space of schemes one is optimizing over.

As noted in Section~\ref{sec:new_solver}, all of the above mentioned optimization algorithms
can be written on the form \eqref{eq:2x2_scheme_components}.
That means that optimizing over the parameters in \eqref{eq:2x2_scheme_components}
can be seen as optimizing over a space of schemes that includes all three algorithms.
Now, introducing the intermediate states $w_{n} = \coneone q_n + \conetwo y_n$ and
$v_{n+1} = \aoneone p_n +  \aonetwo x_n$,
and the $2 \times 2$ matrices
$\SchemeMatrixC, \SchemeMatrixB, \SchemeMatrixA, \SchemeMatrixD$, the scheme
\eqref{eq:2x2_scheme_components} can be written as
\begin{equation}\label{eq:2x2_scheme_matrix}
\begin{aligned}
  & \begin{bmatrix}
    w_{n} \\ y_{n+1}
  \end{bmatrix}
  =
  (\SchemeMatrixC \otimes \Id) \; \;
  \diag(\Prox^{\sigma}_{G^*}, \Id) \;\;
  (\SchemeMatrixB \otimes \Id) \;
  \begin{bmatrix}
    L v_{n} \\ y_{n}
  \end{bmatrix}
  \\
  & \begin{bmatrix}
    v_{n+1} \\ x_{n+1}
  \end{bmatrix}
  = (\SchemeMatrixA \otimes \Id) \; \;
  \diag(\Prox^{\tau}_F, \Id) \; \;
  (\SchemeMatrixD \otimes \Id) \;
  \begin{bmatrix}
    L^* w_{n} \\ x_{n}
  \end{bmatrix},
\end{aligned}
\end{equation}
where the parameters of the scheme are the elements of the matrices.
Here, by $\otimes$ we denote the Kronecker product, and by $\diag\p{A, B, \ldots}$ we denote the diagonal operator with the operators $A, B, \ldots$ on the diagonal.
Connecting this with the previous optimization algorithms, the PDHG algorithm \eqref{eq:alg_CP}
is obtained by setting
\begin{equation*}
  \SchemeMatrixC = \begin{bmatrix}
    1 & 0 \\
    1 & 0
  \end{bmatrix}\!, \quad
  \SchemeMatrixB = \begin{bmatrix}
    \sigma & 1 \\
    0      & 1
  \end{bmatrix}\!, \quad
  \SchemeMatrixA = \begin{bmatrix}
    1 + \theta & -\theta \\
    1          & 0
  \end{bmatrix}\!, \quad
  \SchemeMatrixD = \begin{bmatrix}
    -\tau  & 1 \\
    0      & 1
  \end{bmatrix}\!,
\end{equation*}
the primal-dual Douglas-Rachford algorithm 
\eqref{eq:alg_DR} by taking
\begin{equation*}
  \SchemeMatrixC = \begin{bmatrix}
    \lambda_n & 1-\lambda_n \\
    \lambda_n & 1-\lambda_n
  \end{bmatrix}\!, \quad
  \SchemeMatrixB = \begin{bmatrix}
    \sigma & 1 \\
    0      & 1
  \end{bmatrix}\!, \quad
  \SchemeMatrixA = \begin{bmatrix}
    2         & -1 \\
    \lambda_n & 1-\lambda_n
  \end{bmatrix}\!, \quad
  \SchemeMatrixD = \begin{bmatrix}
    -\tau  & 1 \\
    0      & 1
  \end{bmatrix}\!,
\end{equation*}
and the proposed algorithm from Section~\ref{sec:new_solver} by setting
\begin{equation*}
  \SchemeMatrixC = \begin{bmatrix}
    \sndparam & 1 - \sndparam \\
    \sndparam & 1 - \sndparam
  \end{bmatrix}\!, \;
  \SchemeMatrixB = \begin{bmatrix}
    \sigma & 1 \\
    0      & 1
  \end{bmatrix}\!, \;
  \SchemeMatrixA = \begin{bmatrix}
    1 + \tfrac{\fstparam}{\sndparam} & -\tfrac{\fstparam}{\sndparam} \\
    \fstparam                   & 1 - \fstparam
  \end{bmatrix}\!, \;
  \SchemeMatrixD = \begin{bmatrix}
    -\tau  & 1 \\
    0      & 1
  \end{bmatrix}\!.
\end{equation*}

Considering \eqref{eq:2x2_scheme_matrix} as a neural network, the structure can easily be extended in order to incorporate more memory in the
network.
In this work we assume that the computationally expensive part of the algorithm is the evaluation of the operator $L$ and its adjoint,
which is typically the case in inverse problems in imaging,
e.g., in three-dimensional CT \cite{natterer2001themathematics, natterer2001mathematical}.
Therefore, the extension presented here thus keeps one evaluation $L$ and one evaluation of $L^*$ in each iteration.

To this end, let $N$ be the number of primal variables $x^1, \ldots, x^N \in \PrimS$ and
$M$ be the number of dual variables $y^1, \ldots, y^M \in \DualS$.
Introducing the
four sequences of matrices $\SchemeMatrixC_n, \SchemeMatrixB_n \in \Real^{M \times M}$ and
$\SchemeMatrixA_n, \SchemeMatrixD_n \in \Real^{N \times N}$,
the iterations in \eqref{eq:2x2_scheme_matrix}
can be extended to yield the following algorithm.
\begin{algorithm}
  Choose parameters $\SchemeMatrixC_n, \SchemeMatrixB_n \in \Real^{M \times M}$ and
  $\SchemeMatrixA_n, \SchemeMatrixD_n \in \Real^{N \times N}$, stepsizes
  $\sigma, \tau > 0$, and starting points
  $x^1_0, \ldots, x^N_0 \in \PrimS$, $y^2_0, \ldots, y^M_0 \in \DualS$. For all $n = 1, 2, \ldots$, calculate
  \begin{equation*}
    \begin{aligned}
      \begin{bmatrix}
        y^1_{n+1} \\ y^2_{n+1} \\ \vdots \\ y^M_{n+1}
      \end{bmatrix} &= (\SchemeMatrixC_n \otimes \Id) \; \; \diag(\Prox^{\sigma}_{G^*}, \Id^{M-1}) \;\;
      (\SchemeMatrixB_n \otimes \Id) \;
      \begin{bmatrix}
        L x^1_n \\ y^2_n \\ \vdots \\ y^M_n
      \end{bmatrix}, \\
      \begin{bmatrix}
        x^1_{n+1} \\ x^2_{n+1} \\ \vdots \\ x^N_{n+1}
      \end{bmatrix} &= (\SchemeMatrixA_n \otimes \Id) \; \; \diag(\Prox^{\tau}_F, \Id^{N-1}) \; \; (\SchemeMatrixD_n \otimes \Id) \;
      \begin{bmatrix}
        L^* y^1_{n+1} \\ x^2_n \\ \vdots \\ x^N_n
      \end{bmatrix}.
    \end{aligned}
  \end{equation*}
\end{algorithm}

\begin{remark}
  For the more general formulation of \eqref{eq:general_form}, more specialized
  network architectures than the one resulting from the choice
  \eqref{eq:product_space_reduction} are possible, which handle the dual spaces
  separately instead of using the same stepsize $\sigma$ and matrices
  $\SchemeMatrixC_n$ and $\SchemeMatrixB_n$ for all of them.
  An alternative network in the spirit of, e.g., \cite[Theorem
  2]{bot2015convergence}, to solve \eqref{eq:general_form} reads as follows.
\end{remark}
\begin{algorithm}
  Choose parameters $\SchemeMatrixC_{n, i}, \SchemeMatrixB_{n, i} \in \Real^{M \times M}$, for $i =
  1, \ldots, m$, and $\SchemeMatrixA_n, \SchemeMatrixD_n \in \Real^{N \times N}$,
  stepsizes $\sigma_1, \ldots, \sigma_m, \tau > 0$, and starting points
  $x^1_0, \ldots, x^N_0 \in \PrimS$, $y^2_{0, i}, \ldots, y^M_{0, i} \in
  \DualS_i$, $i = 1, \ldots, m$. For all $n = 1, 2, \ldots$, calculate
  \begin{equation}\label{eq:really_general_scheme}
    \begin{aligned}
      \begin{bmatrix}
        y^1_{n+1, i} \\ y^2_{n+1, i} \\ \vdots \\ y^M_{n+1, i}
      \end{bmatrix} &= (\SchemeMatrixC_{n, i} \otimes \Id) \; \; \diag(\Prox^{\sigma_i}_{G_i^*}, \Id^{M-1}) \;\;
      (\SchemeMatrixB_{n, i} \otimes \Id) \;
      \begin{bmatrix}
        L_i x^1_n \\ y^2_{n, i} \\ \vdots \\ y^M_{n, i}
      \end{bmatrix}, \\
      &\hspace{8.3cm} i = 1, \ldots, m, \\
      \begin{bmatrix}
        x^1_{n+1} \\ x^2_{n+1} \\ \vdots \\ x^N_{n+1}
      \end{bmatrix} &= (\SchemeMatrixA_n \otimes \Id) \; \; \diag(\Prox^{\tau}_F, \Id^{N-1}) \; \; (\SchemeMatrixD_n \otimes \Id) \;
      \begin{bmatrix}
        \sum_{i = 1}^m L_i^* y^1_{n+1, i} \\ x^2_n \\ \vdots \\ x^N_n
      \end{bmatrix}.
    \end{aligned}
  \end{equation}
\end{algorithm}

\subsubsection{Extension to forward-backward-forward methods}

Some methods in the literature, so called forward-backward-forward methods,
include an extra evaluation of the operator and its adjoint per iteration, see,
e.g., \cite{combettes2012primal, bot2013douglas, briceno2011monotone}.
However, since the evaluation of the linear operator is assumed to be the expensive part in our setting we consider this as two iterations.
Thus, if we start with the $x$-iterate and allow for two iterations in our framework to complete one
iteration in such a framework, our proposed algorithm contains, e.g., \cite[Equation (3.1)]{briceno2011monotone}.
Letting $\cdot$ denote an element that can take any value, one such set of matrices is given by
\begin{align*}
\SchemeMatrixC_{2n} &=
\begin{bmatrix}
0     & 0     & 1     \\
\cdot & \cdot & \cdot \\
1     & 0     & 0 
\end{bmatrix},
& \SchemeMatrixB_{2n} &= 
\begin{bmatrix}
\gamma_n & 0 & 1 \\
1        & 0 & 0 \\
0        & 0 & 1 
\end{bmatrix}, \\
\SchemeMatrixA_{2n} &=
\begin{bmatrix}
1     & 0     & -1     \\
\cdot & \cdot & \cdot \\
1     & 1     & 0 
\end{bmatrix},
& \SchemeMatrixD_{2n} &=
\begin{bmatrix}
-\gamma_n & 0     & 1 \\
\gamma_n  & 0     & 0 \\
0         & 0     & 1 
\end{bmatrix},
\intertext{for the even iterations and}
\SchemeMatrixC_{2n+1} &=
\begin{bmatrix}
0     & 1     & 0 \\
\cdot & \cdot & \cdot \\
0     & 0     & 1
\end{bmatrix},
& \SchemeMatrixB_{2n+1} &=
\begin{bmatrix}
\cdot    & \cdot & \cdot \\
0        & 0     & 1 \\
\gamma_n & 0     & 1
\end{bmatrix}, \\
\SchemeMatrixA_{2n+1} &=
\begin{bmatrix}
0     & 0     & 1 \\
\cdot & \cdot & \cdot \\
0     & 0     & 1
\end{bmatrix},
& \SchemeMatrixD_{2n+1} &=
\begin{bmatrix}
\cdot     & \cdot & \cdot \\
\cdot     & \cdot & \cdot \\
-\gamma_n & 0     & 1 \\
\end{bmatrix},
\end{align*}
for the odd iterations.

\begin{remark}
Other forward-backward-forward methods have been proposed in the literature, some of which are general enough to include the PDHG as a special case \cite{he2012convergence}, or both the PDHG and the Douglas-Rachford algorithm as special cases \cite{latafat2017asymmetric}.
However, these methods include a step-length computation in their updates.
This computation involves evaluating the norm of current iterates, which is not possible to achieve by only doing the linear operations we propose.
Of course, allowing the matrix elements to be nonlinear functions of the states would allow us to incorporate also these methods, however, that is beyond the scope of this article.
\end{remark}

\section{Application to inverse problems and numerical experiments}\label{sec:simulation}
As we briefly outline next, optimization problems of the type in \eqref{eq:general_form} arise when solving ill-posed inverse problems by means of variational regularization.

The goal in an inverse problem is to recover parameters characterizing a system under investigation from indirect observations.
This can be formalized as the task of estimating (reconstructing) model parameters, henceforth called signal, $\truesignal \in \RecSpace$ from indirect observations (data) $\data \in \DataSpace$ where
\begin{equation}\label{eq:InvProb}
\data = \ForwardOp\p{\truesignal} + \noise.  
\end{equation}  
In the above, $\RecSpace$ and $\DataSpace$ are typically Hilbert or Banach spaces, and $\ForwardOp \colon \RecSpace \to \DataSpace$ (forward operator) models how a given signal gives rise to data in absence of noise.
Furthermore, $\noise \in \DataSpace$ is a single sample of a $\DataSpace$-valued random element that represents the noise component of data.

A natural approach for solving \eqref{eq:InvProb} is to minimize a function
$\DataDiscrepancy \colon \RecSpace \to \Real$ (data discrepancy functional) that
quantifies the miss-fit in data space. Since this function needs to incorporate the
aforementioned forward operator $\ForwardOp$ and the data $\data$, it is often
of the form
  \[
    \DataDiscrepancy\p{\signal} \defeq \LogLikelihood\p{\ForwardOp\p{\signal},
      \data} \quad\text{for some $\LogLikelihood: \DataSpace \times \DataSpace
      \to \Real$.}
  \]
If $\LogLikelihood$ is the negative data log-likelihood, then minimizing
$\signal \mapsto \DataDiscrepancy\p{\signal}$ corresponds to finding a maximum
likelihood solution to \eqref{eq:InvProb}.

However, finding a minimizer to $\DataDiscrepancy$ is an ill-posed problem, meaning that a solution (if it exists) is discontinuous with respect to the data $\data$.
Variational regularization addresses this issue by introducing an additional
function $\Regularizer \colon \RecSpace \to \RealExt$ (regularization functional)
that encodes a priori information about $\truesignal$ and penalizes undesirable
solutions \cite{engl2000regularization}.
This results in an optimization problem
\begin{equation}\label{eq:OptProb}
	\min_{\signal \in \RecSpace} \Bigl[ \lambda \DataDiscrepancy\p{\signal}
  + \Regularizer\p{\signal} \Bigr],
\end{equation}  
which from a statistical perspective can be interpreted as trying to find a maximum a posteriori estimate \cite{kaipio2006statistical}. 
A common choice of regularization functional, especially for inverse problems in imaging, is the total variation (TV) regularization $\Regularizer\p{\signal} \defeq \norm{\nabla \signal}_1$, but several more advanced regularizers have also been suggested in the literature, typically exploiting some kind of sparsity using an $L_1$-like norm \cite{bruckstein2009sparse}.

In this section, we consider an inverse problem in computerized tomography.
To this end, let $\ForwardOp$ be the \emph{Radon transform} and consider TV regularization.
This means that we are interested in minimizing
\begin{equation}\label{eq:CT_cost_function}
  H_b\p{x} = \norm{\ForwardOp\p{x} - b}_2^2 + \lambda \norm{\nabla x}_1,
\end{equation}
i.e., a family of objective functions that is parametrized by the data $b$.
This means that we can apply the ideas from Section
\ref{sec:learning_opt_solver} on learning an optimization solver.

\subsection{Implementation and specifications of the training}
We train and evaluate several of the algorithms described in this article on a
clinically realistic data set, namely simulated data from human abdomen CT scans
as provided by Mayo Clinic for the AAPM Low Dose CT Grand Challenge
\cite{mccollough2016tu}.
Examples of two-dimensional phantoms from this data set
are given in Figure~\ref{fig:mayo_phantoms}.
Throughout all examples, the
size of the image $x$ is $512\times 512$ pixels, and the regularization parameter
$\lambda > 0$ is fixed.
The Radon transform $\ForwardOp$ used in this example is sampled according to a
fan-beam geometry \cite{natterer2001themathematics} and the data is generated by applying $\ForwardOp$ to the phantoms and then adding
 $5\%$ white Gaussian noise.
Examples of such data (sinograms) are also shown in Figure~\ref{fig:mayo_phantoms}.
    
Problem \eqref{eq:CT_cost_function} is obtained from \eqref{eq:general_form}
by setting $F\p{x} \defeq 0$ for all $x$, 
\begin{align*}
  L_1x &\defeq \ForwardOp\p{x},& L_2x &\defeq \nabla x,  \\
  G_1\p{y_1} &\defeq \norm{y_1 - b}_2^2,& G_2\p{y_2} &\defeq \norm{y_2}_1,
\end{align*}
and all the proximal operators are implemented in ODL \cite{adler2017ODL}. If
not stated otherwise, we use \eqref{eq:product_space_reduction} to reduce
\eqref{eq:general_form} to \eqref{eq:primal_prob_cp}.

\begin{figure}
  \centering	
  \begin{subfigure}[t]{.32\linewidth}
    \includegraphics[width=\linewidth]{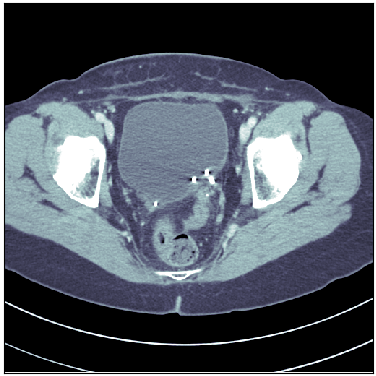}
  \end{subfigure}
  \begin{subfigure}[t]{.32\linewidth}
    \includegraphics[width=\linewidth]{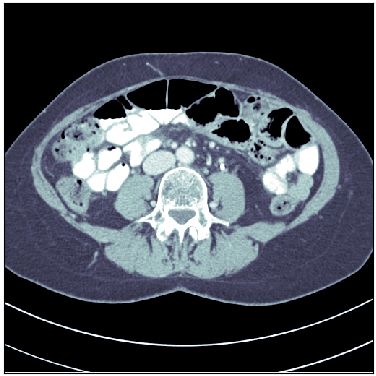}
  \end{subfigure}
  \begin{subfigure}[t]{.32\linewidth}
    \includegraphics[width=\linewidth]{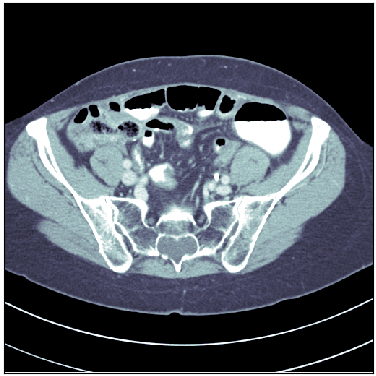}
  \end{subfigure}
  \begin{subfigure}[t]{.32\linewidth}
    \includegraphics[width=\linewidth]{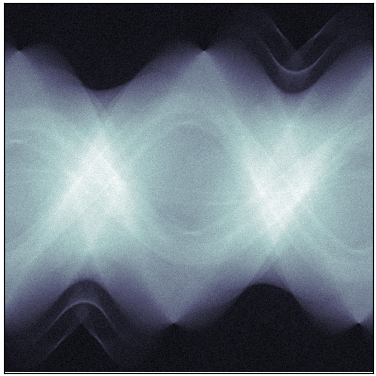}
  \end{subfigure}
  \begin{subfigure}[t]{.32\linewidth}
    \includegraphics[width=\linewidth]{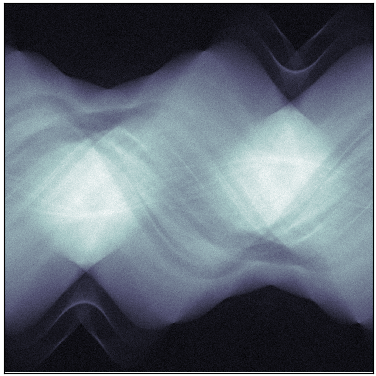}
  \end{subfigure}
  \begin{subfigure}[t]{.32\linewidth}
    \includegraphics[width=\linewidth]{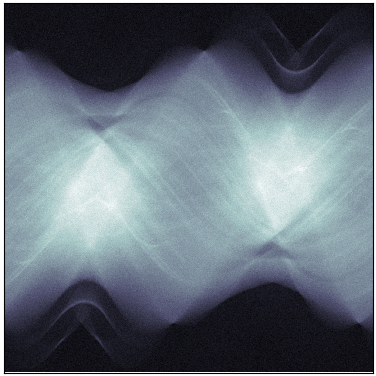}
  \end{subfigure}
  \caption{The top row shows three examples of phantoms used for generating data. These phantoms take values between $[0.0, 3.25]$, but all images are shown using a window set to $[0.8, 1.2]$ in order to enhance contrast of clinically more relevant details. The lower row shows corresponding simulated, noisy sinograms.
  }
  \label{fig:mayo_phantoms}
\end{figure}

For each algorithm, the number of unrolled iterations, corresponding to the
depth of the network, was set to $n_{\rm max} = 10$, and all evaluations have
been done with this depth.
However, in order to heuristically induce better stability of the general
schemes, we have trained using a stochastic depth as follows:
In each step of the training, the depth of the network has been set to the
outcome of the heavy-tailed random variable $\stochastic{n}_{\rm max} = \min \big[
\texttt{round}(8 + \stochastic{Z}), \, 100 \big]$,
where $\stochastic{Z}$ is the exponential of a Gaussian random variable with standard
deviation $1.25$ and mean value $\log\p{2} - 1.25^2/2$, so that
$\Expect[\stochastic{Z}] = 2$. The limitation to $100$ iterations is due
to limits in computational resources.

In order to improve stability and generalization properties of the trained
networks, we have normalized the operators before training, i.e., rescaled them
so that $\norm{\ForwardOp}_2 = \norm{\nabla}_2 = 1$. For the same reasons, we
have used the zero vector as initial guess for all networks.
Training has been done using the \emph{Adam} solver \cite{kingma2014adam}, with
standard parameter values except for $\beta_2 = 0.99$. Moreover, we have used
gradient clipping to limit the norm of the gradient of the training cost
function \eqref{eq:training_cost_function} to be less than or equal to one
\cite{pascanu2012understanding}.
As step length (learning rate) we have used a cosine annealing scheme
\cite{loshchilov2016sgdr}, i.e., a step length which in step $t$ takes the value
\[
  \eta_t = \frac{\eta_0}{2}\p{1 + \cos \p{\pi \frac{t}{t_{\rm max}}}},
\]
where the initial step length $\eta_0$ has been set to $10^{-3}$. We have
trained for $t_{\rm max} = 100 \, 000$ steps and have used 9 out of 10 phantoms
from the AAPM Low Dose CT Grand Challenge for training and one for evaluation.

All algorithms have been implemented using ODL \cite{adler2017ODL}, the GPU
accelerated version of ASTRA \cite{palenstijn2011performance, van2016fast}, and
Tensorflow \cite{abadi2016tensorflow}. The source code to replicate the experiments
is available online, where the weights of the trained networks are also explicitly given.\footnote{\url{https://github.com/aringh/data-driven_nonsmooth_optimization}}
We have used this setup to train
the following methods.

\begin{description}
\item[PDHG method.]
This corresponds to optimal selection of the parameters $\theta$, $\tau$, and
$\sigma$ for the PDHG method \eqref{eq:alg_CP} on the family of cost functions
\eqref{eq:CT_cost_function}.
In order to achieve this, we need to enforce the constraints $\theta \in [0,1]$
and $\sigma \tau \norm{L}^2 < 1$.
This has been done implicitly by a change of variables, namely by 
\begin{equation}
\theta = \frac{\ee^{s_1}}{1 + \ee^{s_1}}, \qquad \tau = \frac{1}{\norm{L}} \cdot
\frac{\ee^{s_2 + s_3}}{1 + \ee^{s_2}}, \qquad \sigma = \frac{1}{\norm{L}} \cdot \frac{\ee^{s_2 - s_3}}{1 + \ee^{s_2}} \label{eq:change_of_variables}
\end{equation}
with $s_1, s_2, s_3 \in \Real$. Here, $s_2$ determines how close the parameters $\sigma$ and $\tau$ are to the constraint $\sigma \tau \norm{L}^2 < 1$, while $s_3$ determines the trade-off between $\tau$ and $\sigma$. 

\item[PDHG method without constraints on the parameters.]
Here we train the same parameters $\theta, \tau, \sigma$ as in the PDHG method.
However, we do not make the change of variables \eqref{eq:change_of_variables},
therefore, no constraints on $\theta$, $\tau$, and $\sigma$ are enforced in the
training. This means that the resulting scheme might not correspond to a
globally convergent optimization algorithm.

\item[Proposed method from Section~{\ref{sec:new_solver}}.]
This corresponds to optimal parameter selection for the method
\eqref{eq:opt_alg} on the family of cost functions \eqref{eq:CT_cost_function}.
To adhere to the constraints in the assumptions in
Theorem~\ref{thm:convergence}, we have used the same kind of variable change as
in \eqref{eq:change_of_variables}, namely
\[
  \sndparam = \frac{2\ee^{s_1}}{1 + \ee^{s_1}}, \quad
  \fstparam = \frac{2\ee^{s_2}}{1 + \ee^{s_2}}, \quad
  \sigma = \frac{K}{\norm{L}} \cdot \frac{\ee^{s_3 - s_4}}{1 + \ee^{s_3}}, \quad
  \tau = \frac{K}{\norm{L}} \cdot \frac{\ee^{s_3 + s_4}}{1 + \ee^{s_3}}
\]
with $s_1, \ldots, s_4 \in \Real$, where $K = \frac{\sndparam^2 \p{2 - \sndparam} \p{2 - \fstparam}}{\p{\sndparam + \fstparam -
    \sndparam \fstparam}^2}$, as in \eqref{eq:sigma_tau_bound}.

\item[Parametrization proposed in Section~
  {\ref{subsec:param_learned_opt_solver}}.]
  Here, we have trained schemes of the form \eqref{eq:really_general_scheme}. We have
  done this for constant sequences of matrices $\SchemeMatrixC_1$, $\SchemeMatrixC_2$,
  $\SchemeMatrixB_1$, $\SchemeMatrixB_2$, $\SchemeMatrixA$, and $\SchemeMatrixD$. We
  restricted ourselves to the sizes $N = M = 2$ and $N = M = 3$.

\end{description}
\subsection{Performance of the trained methods}
To obtain an estimation of the true optimal value of
\eqref{eq:CT_cost_function}, we have run 1\,000 iterations of PDHG with
parameters as in \cite{sidky2012convex}.
In Table~\ref{tab:eval_CT} we show the difference between the obtained objective function value and
the minimal objective function value, averaged over $100$ samples.
As can be seen, the scheme proposed in Section~\ref{sec:learning_opt_solver} with $N=M=3$
performs best at 10 iterations.
Moreover, a general trend seems to be that more parameters in the 
algorithms improve the performance.
Finally, the results from one specific phantom are presented as reconstructions
in Figure~\ref{fig:results_reco}. Note that the reconstruction by PDHG with
parameters as in \cite{sidky2012convex} is left out due to the page layout.

\begin{table}
  \caption{Loss function values for the CT reconstruction after 10 iterations.
  The values given are of the form $\tfrac{1}{100} \sum_{i = 1}^{100} H_{b_i}(x_{10}) - H_{b_i}(x_{i}^*)$, i.e., the difference of the obtained objective function value and an estimate of the true minimum objective function value $H_{b_i}(x_{i}^*)$ corresponding to data $b_i$, averaged over 100 samples.}
  \label{tab:eval_CT}
  \begin{center}
    \begin{tabular}{l c}
      \toprule
      Method & Loss function values \\
      \midrule
      PDHG with parameters from \cite{sidky2012convex}                  & $109.93$ \\
      Trained PDHG with constraints on parameters                       & $82.381$ \\
      Trained solver \eqref{eq:opt_alg}                               & $24.183$ \\
      Trained PDHG without constraints on parameters                    & $27.761$ \\
      Trained scheme of type \eqref{eq:really_general_scheme} with $N = M = 2$ & $20.024$ \\
      Trained scheme of type \eqref{eq:really_general_scheme} with $N = M = 3$ & $\bm{14.905}$ \\
      \bottomrule
    \end{tabular}
  \end{center}
\end{table}

\begin{figure}
  \centering	
  \begin{subfigure}[t]{.32\linewidth}
    \includegraphics[width=\linewidth]{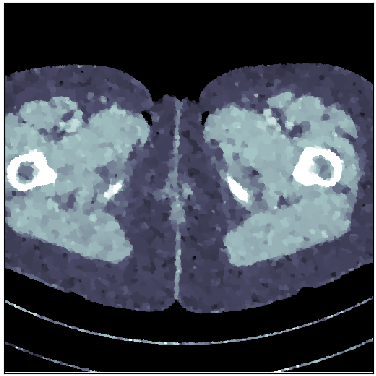}
    \caption{TV reconstruction.}\label{fig:tv_reco}
  \end{subfigure}
  \begin{subfigure}[t]{.32\linewidth}
    \includegraphics[width=\linewidth]{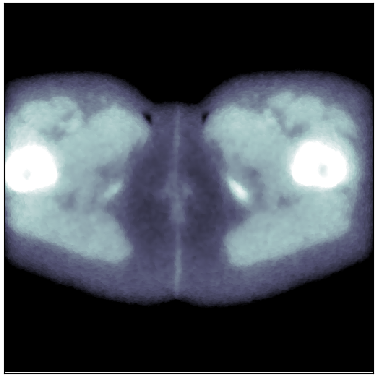}
    \caption{Trained PDHG with constraints on parameters.}
  \end{subfigure}
  \begin{subfigure}[t]{.32\linewidth}
    \includegraphics[width=\linewidth]{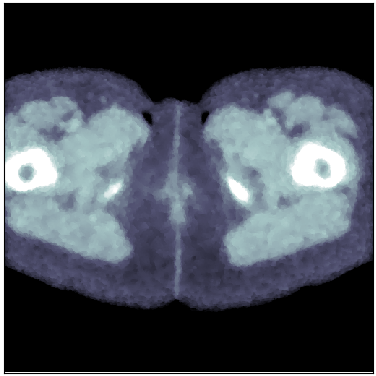}
    \caption{Trained solver \eqref{eq:opt_alg}.}
  \end{subfigure}
  \\[1em]
  \begin{subfigure}[t]{.32\linewidth}
    \includegraphics[width=\linewidth]{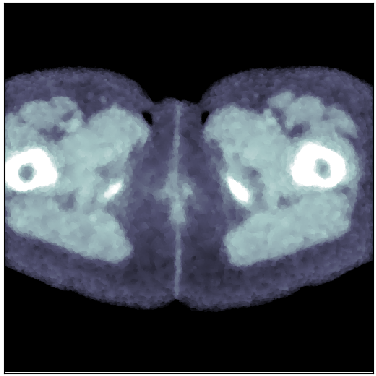}
    \caption{Trained PDHG without constraints on parameters.}
  \end{subfigure}
  \begin{subfigure}[t]{.32\linewidth}
    \includegraphics[width=\linewidth]{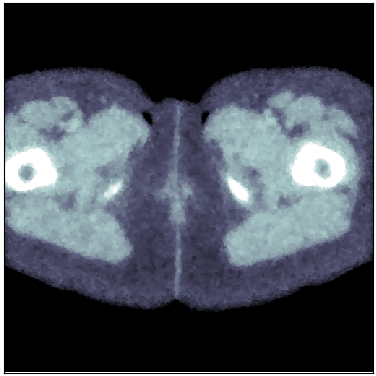}
    \caption{Trained scheme of type \eqref{eq:really_general_scheme} with $N = M = 2$.}
  \end{subfigure}
  \begin{subfigure}[t]{.32\linewidth}
    \includegraphics[width=\linewidth]{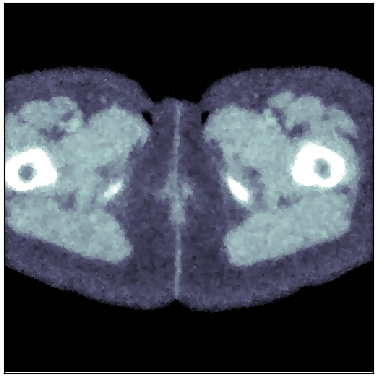}
    \caption{Trained scheme of type \eqref{eq:really_general_scheme} with $N = M = 3$.}
  \end{subfigure}
  \caption{Reconstruction with data from a phantom that was not used in the training. The TV reconstruction, to which they should be compared, is shown in \subref{fig:tv_reco}. All reconstructions use 10 steps.
   The phantom takes values between $[0.0, 2.33]$, but all images are shown using a window set to $[0.8, 1.2]$ in order to enhance contrast of clinically more relevant details.
  }
  \label{fig:results_reco}
\end{figure}

\subsubsection{Generalization to other iteration numbers}
Figure~\ref{fig:obj_fun} shows the objective function value
\eqref{eq:CT_cost_function} as a function of the iteration number, i.e., how
well the learned algorithms generalize to iteration numbers they are not trained
for.
For the trained, convergent solvers, the objective function value keeps
decreasing as expected. Furthermore, the solver
proposed in \eqref{eq:opt_alg} performs better than the others
also when the number of iterations are increased, but poorer in the beginning.
For the other schemes, it can be noted that, while training more parameters 
seems to increase the performance after $10$ iterations, it also seems
to decrease the  generalizability of the algorithm with respect to an
increase in the number of iterations.

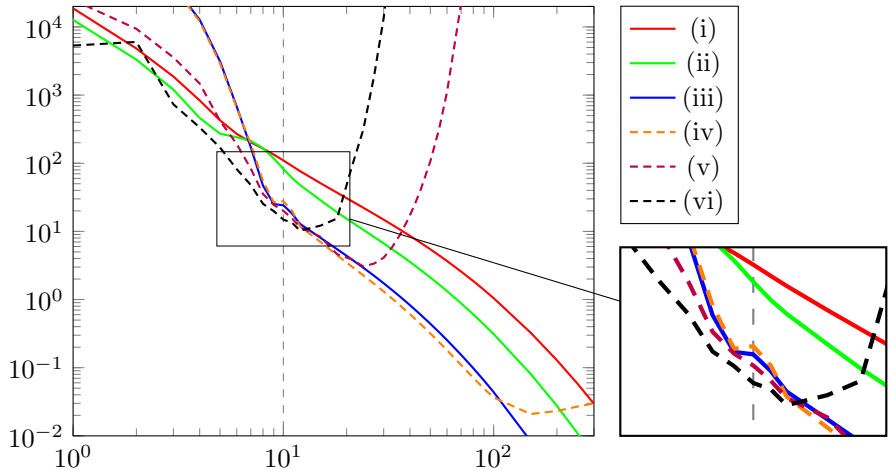
\begin{figure}
  \centering
  \begin{tikzpicture}[spy using outlines={rectangle, magnification=2.0, connect spies}]
    \coordinate (legend) at (7.2, 1.0);
    \coordinate (spyviewer) at (7.2, 2.5);
    \begin{loglogaxis}
      [
      ymin=.01,
      ymax=20000,
      xmin=1,
      xmax=300,
      legend style = {anchor=south west, at={(legend)}},
      no markers,
      cycle list={{red, solid, thick},
                  {green, solid, thick},
                  {blue, solid, thick},
                  {orange, densely dashed, thick},
                  {purple, densely dashed, thick},
                  {black, densely dashed, thick}}
      ]
      \draw[gray, dashed] (10, .0001) -- (10, 40000);
      \addplot table {plot_data/sidky.tab};
      \addplot table {plot_data/cp_const.tab};
      \addplot table {plot_data/our_2x2.tab};
      \addplot table {plot_data/cp.tab};
      \addplot table {plot_data/general_nm2.tab};
      \addplot table {plot_data/general_nm3.tab};
      \legend{(i), (ii), (iii), (iv), (v), (vi)}

      \coordinate (spypoint) at (axis cs:10,30);
      \spy[width=3.5cm,height=2.5cm] on (spypoint) in node [fill=white, anchor=north west] at (spyviewer);
    \end{loglogaxis}
  \end{tikzpicture}
  \caption{The figure shows the values $\tfrac{1}{100} \sum_{i = 1}^{100} H_{b_i}(x_{n}) - H_{b_i}(x_{i}^*)$, where $H_{b_i}(x_{i}^*)$ is an estimate of the true minimum objective function value corresponding to data $b_i$, of several
    reconstruction methods as a function of the iteration number $n$.
    Solid lines are real optimization solvers, dotted lines are schemes that
    might not converge to the true optimal solution.
    (i) PDHG with parameters as in \cite{sidky2012convex},
    (ii) PDHG with trained parameters with constraints,
    (iii) proposed solver \eqref{eq:opt_alg} with trained parameters,
    (iv) PDHG with trained free parameters,
    (v) proposed scheme \eqref{eq:really_general_scheme} with $N=M=2$, and
    (vi) proposed scheme \eqref{eq:really_general_scheme} with $N=M=3$.}%
  \label{fig:obj_fun}%
\end{figure}

\subsubsection{Generalization to deblurring}\label{sec:deblurring}
Next, we investigate the generalizability of the trained networks to other
optimization problems by replacing the forward operator $\ForwardOp$ in
\eqref{eq:CT_cost_function} with a convolution. This corresponds to another TV
problem in imaging, namely image deblurring.

Clearly, the trained networks that correspond to optimization solvers 
with convergence guarantees can be applied to other convex optimization 
problems. (Note that we still normalize the
operators to have operator norm one so that the assumptions in Theorem
\ref{thm:convergence} do not change.)
However, nothing guarantees that parameters that give fast convergence 
on one type of problems will also give fast convergence on another one.

Two example images are shown in Figure~\ref{fig:deconv_ground_truths}. The
images in Figure \ref{subfig:Raccoon_original}--\ref{subfig:Raccoon_TV}, corresponding to
the \enquote{Raccoon} test image, are of size $1024\times 768$ and use a
different regularization parameter.
Blurring has been done with Gaussian kernels.
For the \enquote{Ascent} test image, the kernel has a standard deviation of
approximately three pixels in each direction, whereas for the \enquote{Raccoon}
test image, the kernel has a standard deviation of approximately four pixels in the up-down and six
pixels in the left-right direction.
As for the sinograms in the CT example, $5\%$ white noise has been added to
the blurred images.
Again, to obtain an estimation of the true optimal value of
 we have run 1\,000 iterations of PDHG with
parameters as in \cite{sidky2012convex}.
For each algorithm, the difference between the obtained objective function value and minimal objective function value is
presented in Table~\ref{tab:eval_deconv},
 and the deblurred images are shown in
Figures~\ref{fig:results_deblurring_ascent} and
\ref{fig:results_deblurring_raccoon}. Again, the reconstruction by PDHG with
parameters as in \cite{sidky2012convex} is left out due to the page layout.

The method with $N=M=3$ does not generalize well. However, the method with $N=M=2$ generalizes, and the optimization algorithm from Section~\ref{sec:new_solver}, with trained parameters, is one of the best on these two test problems.

\begin{figure}
  \centering	
  \begin{subfigure}[t]{.32\linewidth}
    \includegraphics[width=\linewidth]{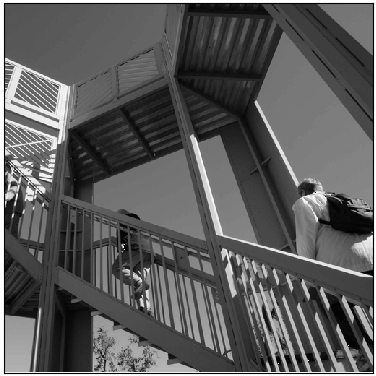}
    \caption{\enquote{Ascent} test image.}
  \end{subfigure}
  \begin{subfigure}[t]{.32\linewidth}
    \includegraphics[width=\linewidth]{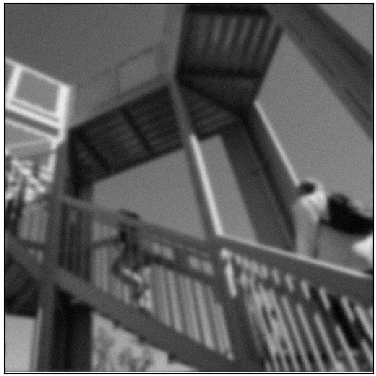}
    \caption{Blurred, noisy image.}
  \end{subfigure}
  \begin{subfigure}[t]{.32\linewidth}
    \includegraphics[width=\linewidth]{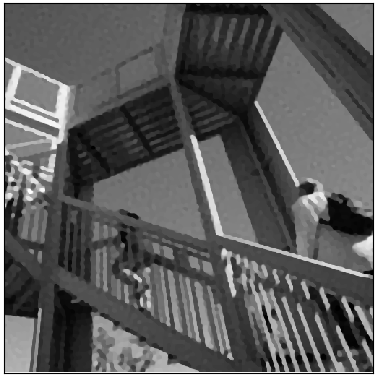}
    \caption{TV reconstruction.}
  \end{subfigure}
  \\[1em]
  \begin{subfigure}[t]{.32\linewidth}
    \includegraphics[width=\linewidth]{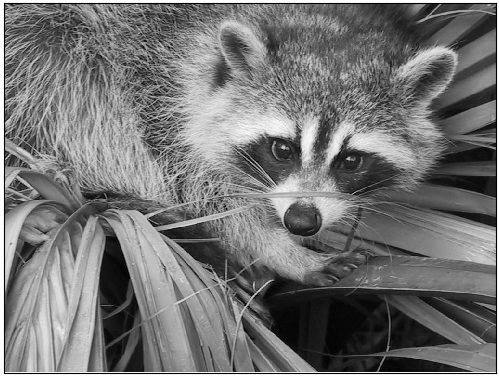}
    \caption{\enquote{Raccoon} test image.}
    \label{subfig:Raccoon_original}
  \end{subfigure}
  \begin{subfigure}[t]{.32\linewidth}
    \includegraphics[width=\linewidth]{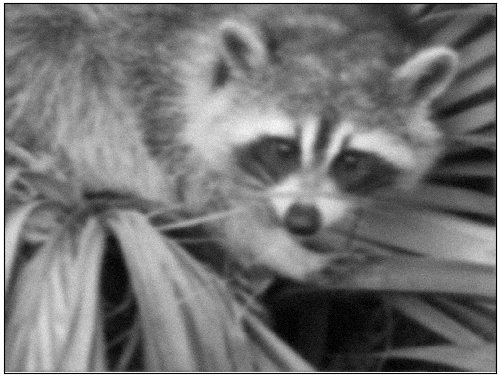}
    \caption{Blurred noisy image.}
    \label{subfig:Raccoon_blurry}
  \end{subfigure}
  \begin{subfigure}[t]{.32\linewidth}
    \includegraphics[width=\linewidth]{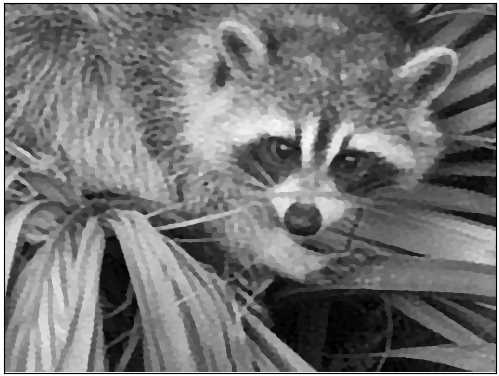}
    \caption{TV reconstruction.}
    \label{subfig:Raccoon_TV}
  \end{subfigure}
  \caption{Example images used for the deblurring problem in Section~\ref{sec:deblurring}.
  }
  \label{fig:deconv_ground_truths}
\end{figure}

\begin{table}
  \caption{Loss function values for the deblurring problem in
    Section~\ref{sec:deblurring}.
    Here, $H_{b_i}(x_{i}^*)$ is an estimate of the true minimum objective function value corresponding to data $b_i$.}
  \label{tab:eval_deconv}
  \begin{center}
    \begin{tabular}{l c c}
      \toprule
      Method & \multicolumn{2}{c}{$H_{b_i}(x_{10}) - H_{b_i}(x_{i}^*)$} \\
      \cmidrule(r){2-3}
                                                       & Ascent & Raccoon      \\
      \midrule
      PDHG with parameters from \cite{sidky2012convex}                  & $5.514$        & $11.475$ \\
      Trained PDHG with constraints on parameters                       & $4.256$        & $8.5126$ \\
      Trained solver \eqref{eq:opt_alg}                               & $\bm{2.173}$        & $4.5898$ \\
      Trained PDHG without constraints on parameters                    & $2.204$        & $\bm{4.4790}$ \\
      Trained scheme of type \eqref{eq:really_general_scheme} with $N = M = 2$ & $3.514$        & $9.9139$ \\
      Trained scheme of type \eqref{eq:really_general_scheme} with $N = M = 3$ & $208.37$       & $873.33$ \\
      \bottomrule
    \end{tabular}
  \end{center}
\end{table}

\begin{figure}
  \centering	
  \begin{subfigure}[t]{.32\linewidth}
    \includegraphics[width=\linewidth]{Ascent/sidky_iter_1000}
    \caption{TV reconstruction.}
  \end{subfigure}
  \begin{subfigure}[t]{.32\linewidth}
    \includegraphics[width=\linewidth]{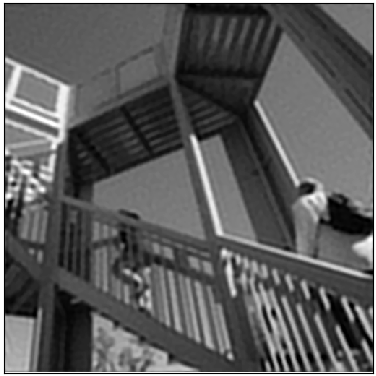}
    \caption{Trained PDHG with constraints on parameters.}
  \end{subfigure}
  \begin{subfigure}[t]{.32\linewidth}
    \includegraphics[width=\linewidth]{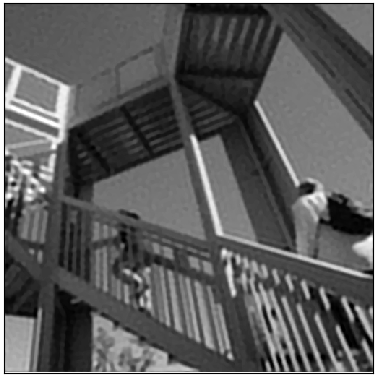}
    \caption{Trained solver \eqref{eq:opt_alg}.}
  \end{subfigure}
  \\[1em]
  \begin{subfigure}[t]{.32\linewidth}
    \includegraphics[width=\linewidth]{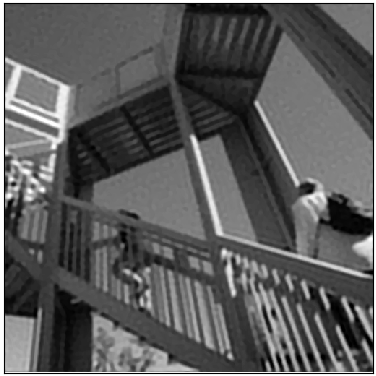}
    \caption{Trained PDHG without constraints on parameters.}
  \end{subfigure}
  \begin{subfigure}[t]{.32\linewidth}
    \includegraphics[width=\linewidth]{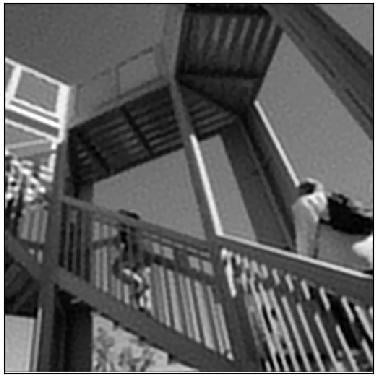}
    \caption{Trained scheme of type \eqref{eq:really_general_scheme} with $N = M = 2$.}
  \end{subfigure}
  \begin{subfigure}[t]{.32\linewidth}
    \includegraphics[width=\linewidth]{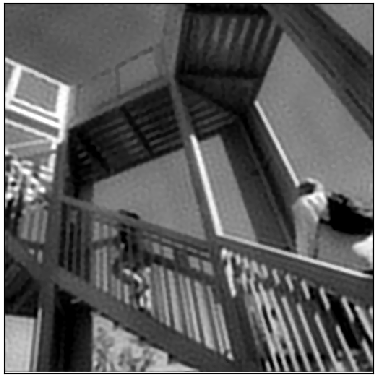}
    \caption{Trained scheme of type \eqref{eq:really_general_scheme} with $N = M = 3$.}
  \end{subfigure}
  \caption{Reconstructions with the trained algorithms for the \enquote{Ascent}
    image.
  }
  \label{fig:results_deblurring_ascent}
\end{figure}

\begin{figure}
  \centering	
  \begin{subfigure}[t]{.32\linewidth}
    \includegraphics[width=\linewidth]{Raccoon/sidky_iter_1000}
    \caption{TV reconstruction.}
  \end{subfigure}
  \begin{subfigure}[t]{.32\linewidth}
    \includegraphics[width=\linewidth]{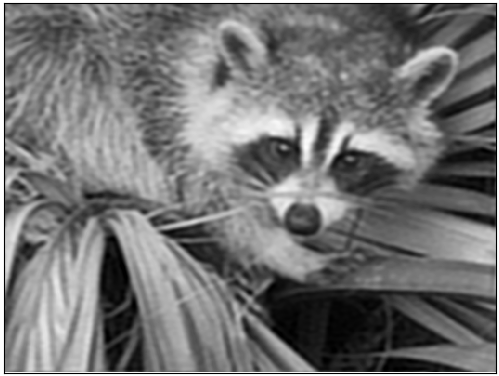}
    \caption{Trained PDHG with constraints on parameters.}
  \end{subfigure}
  \begin{subfigure}[t]{.32\linewidth}
    \includegraphics[width=\linewidth]{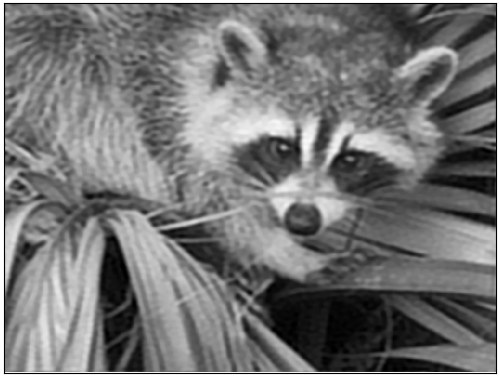}
    \caption{Trained solver \eqref{eq:opt_alg}.}
  \end{subfigure}
  \\[1em]
  \begin{subfigure}[t]{.32\linewidth}
    \includegraphics[width=\linewidth]{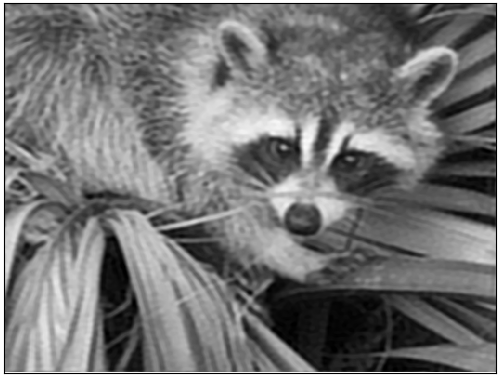}
    \caption{Trained PDHG without constraints on parameters.}
  \end{subfigure}
  \begin{subfigure}[t]{.32\linewidth}
    \includegraphics[width=\linewidth]{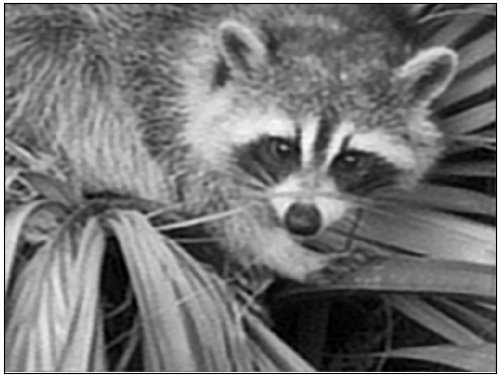}
    \caption{Trained scheme of type \eqref{eq:really_general_scheme} with $N = M = 2$.}
  \end{subfigure}
  \begin{subfigure}[t]{.32\linewidth}
    \includegraphics[width=\linewidth]{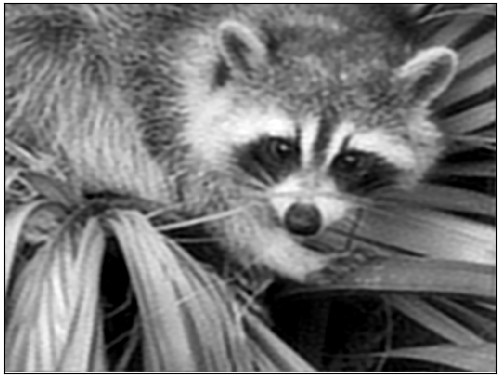}
    \caption{Trained scheme of type \eqref{eq:really_general_scheme} with $N = M = 3$.}
  \end{subfigure}
  \caption{Reconstructions with the trained algorithms for the \enquote{Raccoon}
    image.}
  \label{fig:results_deblurring_raccoon}
\end{figure}

\section{Conclusions and future work}
In this work, we have first proposed a new solver for maximally monotone
inclusion problems and proved convergence guarantees.
In particular, we have also proposed a new convergent primal-dual proximal
solver for convex optimization problems.
Further, we have investigated new aspects of learning an optimization solver.
This is particularly relevant in inverse problems where one can parametrize the
objective function by data, leaving the other parts unchanged.
This can, in fact, also be interpreted as learning a pseudo-inverse of the
forward operator in an unsupervised fashion.
Moreover, the framework admits enforcing convergence and stability properties in
the learning.
We should emphasize that this implies a form of generalizability to other data,
and even other forward operators, since the scheme cannot diverge.

There are several different directions in which the work from this article can
be extended: Regarding the optimization perspective, one could investigate
whether \eqref{eq:stability} can be further relaxed to introduce more free
parameters while retaining convergence, e.g. by relaxing \eqref{eq:a11andeq:c11}
or letting parameters vary in each iteration.

Also from a machine learning perspective, there are aspects to be further
investigated:
\begin{itemize}
\item Since accelerated first-order algorithms like FISTA \cite{beck2009afast}
  can be parametrized by \eqref{eq:really_general_scheme}, does the learning
  result in a scheme with $\Ordo\p{1/n^2}$ convergence rate for the objective
  function values when trained for $n$ iterations?
  
\item Our numerical experiments suggest that training without \enquote{convergence
  constraints} gives the network more freedom and thereby improves accuracy.
  However, the resulting schemes seem to be unstable beyond the fixed number of
  iterates used for training.
  Is it true that, in general, convergence cannot be enforced by training alone?

\item Is it possible to state and prove a time accuracy trade-off theorem, i.e.,
  to estimate the error between the trained solver and the true solution to the
  optimization?
  If so, which properties of the underlying family of objective functions
  (training data) does this require?
\end{itemize}

\section*{Acknowledgments}
We acknowledge Swedish Foundation of Strategic Research grants AM13-0049 and
ID14-0055, Swedish Research Council grant 2014-5870 and support from
Elekta.

The authors thank Dr.\ Cynthia McCollough, the Mayo Clinic, and the American
Association of Physicists in Medicine for providing the data necessary for
performing comparison using a human phantom.

\printbibliography
\end{document}